\newcommand{\N}{\mathbb{N}}
\newcommand{\Z}{\mathbb{Z}}
\newcommand{\R}{\mathbb{R}}
\newcommand{\C}{\mathbb{C}}
\newcommand{\T}{\mathbb{T}}
\newcommand{\D}{\mathcal{D}}
\newcommand{\DT}{\mathcal{D}'(\T)}
\newcommand{\MT}{\mathcal{M}(\T)}
\newcommand{\LL}{\ell^p}
\newcommand{\A}[1]{A^{#1}(\mathbb{T})}
\newcommand{\supp}{\mathrm{supp}}
\newcommand{\PT}{\mathcal{P}(\T)}
\newcommand{\PTp}{\mathcal{P}_+(\T)}
\newcommand{\Lip}{\mathrm{Lip}}
\newcommand{\cZ}{{\mathcal{Z}}}
\title[Cyclicity in $\ell^p$ spaces and zero sets of the Fourier transforms]{Cyclicity in {\large$\LL$} spaces and zero sets of the Fourier transforms}
\author[F. Le Manach]{Florian Le Manach}
\date{\today}
\address{Institut de Math\'ematiques de Bordeaux, Universit\'e de Bordeaux/Bordeaux INP/CNRS, 351 Cours de la
Lib\'eration, 33 405 Talence, France.}
\email{florian.le-manach@math.u-bordeaux.fr}
\keywords{Cyclicity, weighted $\ell^p$ spaces} 
\subjclass[2000]{primary 47A16; secondary 47B37, 43A15, 31C15.}
\newtheorem{Def}{Definition}[section]
\newtheorem{lemme}[Def]{Lemma}
\newtheorem{prop}[Def]{Proposition}
\newtheorem{theo}[Def]{Theorem}
\newtheorem{rem}[Def]{Remark}
\numberwithin{equation}{section}
\begin{document}

\begin{abstract}
We study the cyclicity of vectors $u$ in $\ell^p(\Z)$. It is known that a vector $u$ is cyclic in $\ell^2(\Z)$ if and only if the zero set, $\cZ(\widehat{u})$, of its Fourier transform, $\widehat{u}$, has Lebesgue measure zero and $\log |\widehat{u}| \not \in L^1(\T)$, where $\T$ is the unit circle. Here we show that, unlike $\ell^2(\Z)$, there is no characterization of the cyclicity of $u$ in $\ell^p(\Z)$, $1<p<2$, in terms of $\cZ(\widehat{u})$ and the divergence of the integral $\int_\T \log |\widehat{u}| $.
Moreover we give both necessary conditions and sufficient conditions for $u$ to be cyclic in $\ell^p(\Z)$, $1<p<2$.
\end{abstract}

\maketitle

\section{Introduction and main results}

In this paper we will study cyclic vectors in $\ell^p$ spaces for $1<p<2$.
A vector $u=(u_n)_{n \in \Z} \in \ell^p(\Z)$ is called {\it cyclic} in $\ell^p(\Z)$ if the linear span of $ \{(u_{n-k})_{n \in \Z},~ k = 0,1,2,\dots \}$ is dense in $\ell^p(\Z)$ and $u$ is called {\it bicyclic} in $\ell^p(\Z)$ if the linear span of $ \{(u_{n-k})_{n \in \Z},~ k \in \Z \}$ is dense in $\ell^p(\Z)$. Note that the bicyclicity in this paper is defined as cyclicity in \cite{LMF} and \cite{LEV}.

\bigskip
We denote by $\T$ the unit circle of $\C$. The Fourier transform of $u=(u_n)_{n \in \Z} \in \ell^p(\Z)$ is given by
$$\widehat{u} : \zeta \in \T \mapsto \sum_{n\in \Z}u_n \zeta^n$$
and we denote by $\cZ(\widehat{u})$ the zero set of $\widehat{u}$ on $\T$:
$$\cZ(\widehat{u})=\{\zeta \in \T,~ \widehat{u}(\zeta)=0\}.$$
Note that if $u \in \ell^1(\Z)$ then $\widehat{u}$ is continuous and $\cZ(\widehat{u})$ is well defined, while if $u \in \ell^2(\Z)$ then $\widehat{u} \in L^2(\T)$ and $\cZ(\widehat{u})$ is defined modulo null sets.

\bigskip
Wiener showed in \cite{WIE} that the bicyclicity of a sequence $u$ in $\ell^1(\Z)$ and $\ell^2(\Z)$ is characterized only in terms of $\cZ(\widehat{u})$: $u$ is bicyclic in $\ell^1(\Z)$ if and only if $\cZ(\widehat{u})=\varnothing$ and $u$ is bicyclic in $\ell^2(\Z)$ if and only if $\cZ(\widehat{u})$ has Lebesgue measure zero. Wiener asked if the same phenomenon holds for $u \in \ell^p(\Z)$, $1<p<2$.

Lev and Olevskii, in \cite{LEV}, answered this question negatively. We cannot characterize the bicyclicity of $u \in \ell^1(\Z)$ only in terms of the zero set of $\widehat{u}$: for $1<p<2$ there exist $u,v \in \ell^1(\Z)$ such that $\cZ(\widehat{u})=\cZ(\widehat{v})$ and one is bicyclic in $\ell^p(\Z)$ and the other is not. Here we consider a similar problem for cyclicity in $\ell^p(\Z)$, $1<p<2$.

\bigskip
It follows from Wiener characterization of bicyclic vectors and Szeg\"o infimum that a vector $u$ is cyclic in $\ell^2(\Z)$ if and only if $\cZ(\widehat{u})$ has Lebesgue measure zero and $\log |\widehat{u}| \not \in L^1(\T)$ (see \cite[Corollary 4.1.2, Theorem 4.1.4]{NIK}). Furthermore when $u$ is cyclic in $\ell^p(\Z)$ then $u$ is cyclic in $\ell^q(\Z)$ for $q \geq p$. A direct consequence of this fact is that, for $u \in \ell^1(\Z)$, if $u$ is cyclic in $\ell^p(\Z)$, $1 \leq p \leq 2$, then $\cZ(\widehat{u})$ has Lebesgue measure zero and $\log |\widehat{u}| \not \in L^1(\T)$. It follows from this and Wiener characterization of bicyclic vectors in $\ell^1(\Z)$ that there is no cyclic vectors in $\ell^1(\Z)$ (see also \cite{ABA} for a different proof).

\bigskip
The existence of cyclic vectors on $\ell^p(\Z)$, for $p > 1$, was established by Abakumov, Atzmon and Grivaux in \cite[Corollary 1]{ABA} in 2007. Note that this result was first proved by Olevskii in an unpublished manuscript in 1998 (see \cite[Remark 1]{ABA}).

\bigskip
We show that for sequences $u \in \ell^1(\Z)$ such that $\log |\widehat{u}| \not \in L^1(\T)$, there is no characterization of the cyclic vectors $u$ in $\ell^p(\Z)$ only in terms of $\cZ(\widehat{u})$.

\begin{theo} \label{levsuite}
Let $1 < p < 2$. There exist $u$ and $v$ in $\ell^1(\Z)$ such that $\cZ(\widehat{u})=\cZ(\widehat{v})$, $\log |\widehat{u}| \not \in L^1(\T)$, $\log |\widehat{v}| \not \in L^1(\T)$, with $u$ non-cyclic in $\ell^p(\Z)$ and $v$ cyclic in $\ell^p(\Z)$.
\end{theo}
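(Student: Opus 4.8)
The plan is to build a single closed null set $K\subset\T$ and two sequences $u,v\in\ell^1(\Z)$ with $\cZ(\widehat u)=\cZ(\widehat v)=K$, with $\log|\widehat u|\notin L^1(\T)$ and $\log|\widehat v|\notin L^1(\T)$, and with $u$ non-cyclic and $v$ cyclic in $\ell^p(\Z)$. The overall scheme follows the one used by Lev and Olevskii for bicyclicity, but $K$ must now meet three requirements simultaneously. It has to fail the Beurling--Carleson condition, that is $\int_\T\log\dist(\zeta,K)^{-1}\,|d\zeta|=\infty$, so that any $f\in A(\T)$ vanishing on $K$ only to first order (with $|f|$ comparable to $\dist(\cdot,K)$) automatically has $\log|f|\notin L^1(\T)$. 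Following the Piatetski--Shapiro/Lev--Olevskii idea, $K$ must also carry a nonzero distribution $\sigma$ whose sequence of Fourier coefficients lies in $\ell^{p'}(\Z)$ --- so $K$ is a set of multiplicity at the level of $\ell^{p'}$-distributions --- while carrying no nonzero \emph{measure} with Fourier coefficients in $\ell^{p'}(\Z)$ --- a set of uniqueness at the level of $\ell^{p'}$-measures. I would obtain such a $K$ by starting from a Cantor-type set as in Lev and Olevskii and enlarging its complementary arcs until the Beurling--Carleson condition fails, checking that the two spectral properties are preserved; carrying this out, possibly in a way that depends on $p$, is the first technical point.

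For the non-cyclic vector it suffices, since cyclicity implies bicyclicity, to produce $u$ non-bicyclic in $\ell^p(\Z)$. Take $\widehat u\in C^\infty(\T)\subset A(\T)$ vanishing to infinite order exactly on $K$; a concrete model is $\widehat u=\exp(-\dist(\cdot,K)^{-1})$, extended by $0$ on $K$. Then $\cZ(\widehat u)=K$, and $\log|\widehat u|=-\dist(\cdot,K)^{-1}$ is non-integrable precisely because $K$ is not Beurling--Carleson. Since $\widehat u$ is smooth and flat on $K$, and $\sigma$ is a distribution of finite order supported in $K$, the product $\widehat u\,\sigma$ vanishes. Encoding $\sigma$ as a sequence $w\in\ell^{p'}(\Z)$ (conjugating and reflecting its Fourier coefficients) gives a nonzero $w$ with $\sum_{n}u_{n-k}\,\overline{w_n}=0$ for all $k\in\Z$; hence the two-sided translates of $u$ span a proper closed subspace of $\ell^p(\Z)$, so $u$ is non-bicyclic, and in particular non-cyclic, in $\ell^p(\Z)$.

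For the cyclic vector, take $\widehat v\in A(\T)$ with $|\widehat v|$ comparable to $\dist(\cdot,K)$; then $\cZ(\widehat v)=K$, and once again $\log|\widehat v|\notin L^1(\T)$ because $K$ is not Beurling--Carleson. The task is to prove that $v$ is cyclic in $\ell^p(\Z)$, and this is the crux. By the standard duality, $v$ is cyclic if and only if the only distribution $\tau$ with Fourier coefficients in $\ell^{p'}(\Z)$ for which $\widehat v\cdot\tau$ has spectrum contained in $\{n\ge 1\}$ is $\tau=0$. Two features make this genuinely harder than the $\ell^2$ case. First, such a $\tau$ need not be supported on $K$, so it cannot be ruled out by a support argument as in the bicyclic problem. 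Second, the Hardy/Nevanlinna factorisation that closes the argument when $p=2$ is unavailable here, since an analytic pseudomeasure whose Fourier coefficients lie in $\ell^{p'}(\Z)$ with $p'>2$ need not be of bounded characteristic. The way forward I would pursue is to isolate a general sufficient condition for cyclicity in $\ell^p(\Z)$ --- in spirit: $\widehat v\in A(\T)$, with $\cZ(\widehat v)=K$ a uniqueness set for $\ell^{p'}$-measures and $\widehat v$ vanishing on $K$ only to first order --- and to prove it by an explicit approximation argument that replaces Hardy-space techniques by direct estimates using the slow vanishing of $\widehat v$ on $K$ together with the thinness of $K$. Establishing this sufficient condition is the step I expect to be the main obstacle, and it is precisely what lies beyond Lev and Olevskii's bicyclicity result.
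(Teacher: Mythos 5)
Your treatment of the non-cyclic vector is essentially the paper's: the paper also takes $\widehat u=e^{-1/d(\cdot,K)}$ and kills bicyclicity by pairing against a nonzero $S\in A^q(\T)$ supported on $K$, via Herz's criterion (Proposition \ref{regCycl}). (Two small corrections there: $\log|\widehat u|=-1/d(\cdot,K)$ is non-integrable for \emph{any} nonempty closed $K$, not ``precisely because'' $K$ fails Beurling--Carleson --- the Carleson condition concerns $\log(1/d)$, not $1/d$ --- so that requirement on $K$ is superfluous; and for the cyclic vector, $\log|\widehat v|\notin L^1(\T)$ follows for free from cyclicity in $\ell^p(\Z)\subset\ell^2(\Z)$ via the Szeg\"o characterization, so it is superfluous there too.) The genuine gap is the one you name yourself: you never prove that your candidate $v$, with $|\widehat v|\asymp d(\cdot,K)$, is cyclic, and the ``general sufficient condition'' you propose to isolate is left entirely unproved. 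This is not a deferrable verification --- it is the whole content of the theorem beyond Lev--Olevskii --- and no criterion of the form you describe (first-order vanishing on a set of uniqueness for $\ell^{q}$-measures) is available: the sufficient conditions the paper does establish in Section 4 require a Hausdorff-dimension bound $\dim(\cZ(f))<2/q$ that a set supporting a nonzero $S\in A^q(\T)$ necessarily violates, so they cannot apply to any function vanishing on your $K$.

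The paper sidesteps the explicit construction entirely. It takes $K$ to be a \emph{Helson} set supporting a nonzero $S\in A^q(\T)$ (Theorem \ref{LevHelson}) and proves (Theorem \ref{Existence}) by a Baire category argument that the functions cyclic in $A^p(\T)$ for all $p>1$ form a dense $G_\delta$ in the ideal $\mathcal I(K)=\{g\in A(\T):\,g|_K=0\}$. The density step combines two inputs: the Abakumov--Atzmon--Grivaux theorem that cyclic vectors are dense in $\mathcal I(\{1\})$, which supplies a cyclic $h$ near any $g_0\in\mathcal I(K)$ and transfers to cyclicity in $A^p(\T)$ through the explicit factors $h_k(z)=(z-1)/(z-1-1/k)$ (Lemma \ref{cycliciteA0Ap}); and Lev--Olevskii's Lemma \ref{lemme10}, which uses the Helson property of $K$ in an essential way to correct $h$ by a function $f$ with $f|_K=h|_K$, controlled $A^1$-norm, and arbitrarily small $A^p$-norm, so that $g=h-f$ lies in $\mathcal I(K)$ while preserving the two approximation conditions of Proposition \ref{base}. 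Your weaker hypothesis that $K$ support no nonzero measure with coefficients in $\ell^{q}$ does not give you this extension lemma, and without it neither the Baire route nor your proposed direct route closes. To repair the proof, replace your requirements on $K$ by the Helson property and argue non-constructively as above.
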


Until now we did not know more than the existence of cyclic vectors in $\ell^p(\Z)$. Here we give, when $\widehat{u}$ is smooth, both necessary conditions and sufficient conditions for $u$ to be cyclic in $\ell^p(\Z)$, $1<p<2$, and in a more general case: the weighted $\ell^p$ spaces.  In particular this allows to construct explicit examples of cyclic vectors in weighted $\ell^p$ spaces. Here are some results that we get: for $u \in \ell^1(\Z)$ and $1<p<2$,

\begin{itemize}
\item if $\widehat{u} \in C^\infty(\T)$, $\dim(\cZ(\widehat{u})) < 2(p-1)/p$ and $\log |\widehat{u}| \not \in L^1(\T)$ then $u$ is cyclic in $\ell^p(\Z)$,
\item if $\widehat{u} \in \text{Lip}_\delta(\T)$ for $\delta > 1/p-1/2$, $\dim(\cZ(\widehat{u})) < 2(p-1)/p$ and $\log (d(\cdot,\cZ(\widehat{u}))) \not \in L^1(\T)$ then $u$ is cyclic in $\ell^p(\Z)$,
\end{itemize}
where $\dim$ is the Hausdorff dimension (see \cite[Chap II]{KAH}), $d(\cdot,\cZ(\widehat{u}))$ is the Euclidean distance to $\cZ(\widehat{u})$ and $f \in  \text{Lip}_\delta(\T)$ means that there exists $C > 0$ such that 
\begin{equation}\label{Lipdef}
|f(\zeta)-f(\zeta')| \leq C |\zeta-\zeta'|^\delta,\qquad  \zeta,\zeta' \in \T.  
\end{equation}

Let $E$ be a closed subset of $\T$. The set $E$ is called a Carleson set if $\log d(\cdot,E)\in L^1(\T)$ (see \cite{CAR1}).  This condition is equivalent to the fact that $E$ has a Lebesgue measure zero  and $\sum_n |I_n|\log |I_n|>-\infty$, where $(I_n)$ are the complementary intervals of $E$. Note that  if $\widehat{u} \in \text{Lip}_\delta(\T)$ such that $\cZ(\widehat{u})$ is not a Carleson set then $\log |\widehat{u}|\notin L^1(\T)$.

\bigskip
The paper is organized as follows: in section 2 we give preliminary results, in section 3 we prove Theorem \ref{levsuite} and section 4 is devoted to establishing both necessary conditions and sufficient conditions for cyclicity in weighted $\ell^p(\Z)$.

\bigskip

We use the following notations:
\begin{itemize}
\item $A\lesssim B$ means that there is an absolute constant $C$ such that $A \le CB$. 
\item $A\asymp B$  if both $A\lesssim B$ and $B\lesssim A$. 
\end{itemize}

\section{Preliminaries}

We denote by $\N_0$ the set of all non-negative integers and by $\N$ the set of all positive integers.
Let $X$ be a metric linear space of complex functions on $\T$ such that the shift operator $S$, given by
$$S(f)(\zeta) = \zeta f(\zeta)$$
is a topological isomorphism of $X$ onto itself. Let $f \in X$ and $\Lambda \subset \Z$, we denote by $[f]_\Lambda^X$ the closed linear span of $\{ z^nf,~ n \in \Lambda \}$. We say that $f \in X$ is {\it cyclic} in $X$ if $[f]_{\N_0}^X=X$ and $f$ is {\it bicyclic} in $X$ if $[f]_\Z^X=X$. Note that $f$ is cyclic in $X$ if and only if $f$ is bicyclic in $X$ and $[f]_{\N_0}^X=[f]_\Z^X$. Moreover
\begin{equation} \label{equitop}
[f]_{\N_0}^X=[f]_\Z^X \Longleftrightarrow f \in [f]_{\N}^X
\end{equation}

Indeed, if $[f]_{\N_0}^X=[f]_\Z^X$ then we have $\overline{z}f \in [f]_{\N_0}^X$ and we obtain that $f \in [f]_{\N}^X$, since the multiplication by $z$ is continuous in $X$. Conversely, we assume that $f \in [f]_{\N}^X$. Then we see easily that $\overline{z}f \in [f]_{\N_0}^X$. Moreover if $\overline{z}^nf \in [f]_{\N_0}^X$ for some $n \geq 1$ then $\overline{z}^{n+1}f = \lim \overline{z} P_kf$ with $P_k \in \PTp$. Since $P_k \overline{z}f \in [f]_{\N_0}^X$ and $[f]_{\N_0}^X$ is closed in $X$, we obtain $\overline{z}^{n+1}f \in [f]_{\N_0}^X$. Thus by induction, for all $n \geq 1$, $\overline{z}^nf \in [f]_{\N_0}^X$ and then $[f]_{\N_0}^X=[f]_\Z^X$.

\bigskip
We denote by $\PT$ the set of trigonometric polynomials on $\T$ and by $\PTp$ the set of analytic polynomials on $\T$. Now we suppose, in addition, that $X$ is a Banach space such that $\PT$ is dense in $X$. Then $f \in X$ is bicyclic in $X$ if and only if there exists a sequence $(P_n)$ of trigonometric polynomials such that
\begin{equation}\label{caraCyclNorm}
\lim_{n \to \infty} \|1-P_nf\|_{X} = 0.
\end{equation}

So we obtain, from this and \eqref{equitop}, the following proposition:

\begin{prop} \label{base}
Let $f \in X$. The function $f$ is cyclic in $X$ if and only if there exist $P_n \in \PT$ and $Q_n \in \PTp$ satisfying 
$$\|1-P_n f \|_{X}  \underset{n \to \infty}{\longrightarrow} 0 \quad \text{ and } \quad \|f-zQ_n f \|_{X} \underset{n \to \infty}{\longrightarrow} 0.$$
\end{prop}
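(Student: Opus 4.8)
The plan is to read the statement off from the three facts assembled just above, with essentially no extra work. Recall first that $f$ is cyclic in $X$ if and only if $f$ is bicyclic in $X$ \emph{and} $[f]_{\N_0}^X=[f]_\Z^X$; recall also that, since $\PT$ is dense in $X$, the bicyclicity of $f$ is equivalent to the existence of a sequence $(P_n)$ in $\PT$ with $\|1-P_nf\|_X\to 0$, which is exactly condition \eqref{caraCyclNorm}. Thus the only thing left to do is to rephrase the condition $[f]_{\N_0}^X=[f]_\Z^X$ in terms of the $Q_n$'s.

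For that I would argue as follows. By \eqref{equitop}, $[f]_{\N_0}^X=[f]_\Z^X$ is equivalent to $f\in[f]_{\N}^X$. Now the (non‑closed) linear span of $\{z^nf:\ n\in\N\}$ coincides with $\{zQf:\ Q\in\PTp\}$: indeed $\sum_{n=1}^N a_nz^nf=z\bigl(\sum_{n=1}^N a_nz^{n-1}\bigr)f$ with $\sum_{n=1}^N a_nz^{n-1}\in\PTp$, and conversely every $zQf$ with $Q\in\PTp$ is of this form. Passing to closures in $X$, the condition $f\in[f]_{\N}^X$ holds if and only if there is a sequence $(Q_n)$ in $\PTp$ with $\|f-zQ_nf\|_X\to 0$.

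Combining the two observations gives: $f$ is cyclic in $X$ if and only if there exist $P_n\in\PT$ with $\|1-P_nf\|_X\to0$ and $Q_n\in\PTp$ with $\|f-zQ_nf\|_X\to0$, which is precisely the claimed equivalence. I do not expect a genuine obstacle here; the proposition is a bookkeeping consequence of \eqref{equitop} and \eqref{caraCyclNorm}. The only hypotheses that are actually used beyond the definitions are the density of $\PT$ in $X$ (needed to invoke \eqref{caraCyclNorm}) and the continuity of multiplication by $z$ on $X$ (already used in the verification of \eqref{equitop}), both of which are part of the standing assumptions on $X$.
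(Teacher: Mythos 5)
Your proposal is correct and matches the paper's own (very brief) derivation: the paper likewise obtains Proposition \ref{base} directly by combining the remark that cyclicity is equivalent to bicyclicity together with $[f]_{\N_0}^X=[f]_\Z^X$, the norm characterization \eqref{caraCyclNorm} of bicyclicity, and the equivalence \eqref{equitop}. Your explicit identification of the linear span of $\{z^nf:\ n\in\N\}$ with $\{zQf:\ Q\in\PTp\}$ is exactly the bookkeeping step the paper leaves implicit.
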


Now we study the weighted $\ell^p$ spaces. We denote by $\DT$ the set of distributions on $\T$ and $\MT$ the set of measures on $\T$. For $S \in \DT$ and $n \in \Z$, we denote by $\widehat{S}(n)$ the $n$th Fourier coefficient of $S$ defined by $\widehat{S}(n) = \langle S, z^{-n} \rangle$ and we write $S = \sum_{n \in \Z} \widehat{S}(n) z^n$. If $f \in L^1(\T)$ we have
$$\widehat{f}(n)=  \int_\T f(\zeta)\zeta^{-n} \frac{|d \zeta |}{2\pi}.$$
For $p \geq 1$ and $\beta \in \R$, we define the Banach spaces
$$\ell^p_\beta(\Z) = \Big\{ (u_n)_{n \in \Z} \in \C^\Z, \;\; 
 \|(u_n)\|_{\ell^p_\beta}^p = \sum_{n\in \Z} |u_n|^p (1+|n|)^{p\beta} < \infty \Big\}$$
 and
$$A^p_\beta(\T) = \Big\{ S \in \DT, \;\; 
 \|S\|_{A^p_\beta}^p = \sum_{n\in \Z} |\widehat{S}(n)|^p (1+|n|)^{p\beta} < \infty \Big\}.$$
We will write $A^p(\T)$ for the space $A^p_0(\T)$ and $A(\T)$ for the space $A^1(\T)$ which is the Wiener algebra. The Fourier transformation is an isometric isomorphism between $\ell^p_\beta(\Z)$ and $A^p_\beta(\T)$ since for $S \in A^p_\beta(\T)$ we have
$$\|S\|_{A^p_\beta} = \|(\widehat{S}(n))_{n \in \Z}\|_{\ell^p_\beta}.$$
Moreover, for a sequence $(u_n)_{n \in Z}$ and $k \in \Z$, we have $\widehat{(u_{n-k})} = z^k \widehat{(u_n)}$, so $u$ is cyclic in $\ell^p_\beta(\Z)$ if and only if $\widehat{u}$ is cyclic in $A^p_\beta(\T)$. From now we will state and prove our results in $A^p_\beta(\T)$.  Note that if $1 \leq p \leq 2$ and $\beta \geq 0$, $A^p_\beta(\T)$ is a subset $L^2(\T)$.

\bigskip
For $1 \leq p < \infty$ and $\beta \geq 0$ we define the product of $f \in A^1_\beta(\T)$ and $S \in A^p_\beta(\T)$ by
$$fS = \sum_{n \in \Z} \Big( \sum_{k\in \Z} \widehat{f}(k)\widehat{S}(n-k) \Big) z^n,$$
and we see that $\|fS\|_{A^p_\beta} \leq \|f\|_{A^1_\beta} \|S\|_{A^p_\beta}$. 

\bigskip
The following lemma gives us different inclusions between the $A^p_\beta(\T)$ spaces.

\begin{lemme}[\cite{LMF}] \label{inclusionAp}
Let $1 \leq r,s < \infty$ and $\beta,\gamma \in \R$.
\begin{enumerate}
\item If $r \leq s$ then $A^r_\beta(\T) \subset A^s_\gamma(\T) \iff \gamma \leq \beta$.
\item If $r>s$ then $A^r_\beta(\T) \subset A^s_\gamma(\T) \iff \beta - \gamma > \frac{1}{s} - \frac{1}{r}$.
\end{enumerate}
\end{lemme}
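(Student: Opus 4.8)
The plan is to move everything to the sequence side. Since the Fourier transformation is an isometric isomorphism between $A^p_\beta(\T)$ and $\ell^p_\beta(\Z)$, and a distribution $S$ lies in $A^p_\beta(\T)$ exactly when $(\widehat S(n))_n\in\ell^p_\beta(\Z)$, the inclusion $A^r_\beta(\T)\subset A^s_\gamma(\T)$ is equivalent to $\ell^r_\beta(\Z)\subset\ell^s_\gamma(\Z)$. Writing $w_n=1+|n|$ and $\delta=\gamma-\beta$, and using the isometries $(c_n)\mapsto (c_nw_n^\beta)$ of $\ell^r_\beta(\Z)$ onto $\ell^r(\Z)$ and $(c_n)\mapsto (c_nw_n^\gamma)$ of $\ell^s_\gamma(\Z)$ onto $\ell^s(\Z)$, this is in turn equivalent to the statement that the diagonal operator $D_\delta\colon (a_n)\mapsto (a_nw_n^\delta)$ maps $\ell^r(\Z)$ into $\ell^s(\Z)$. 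So the task reduces to deciding, for each pair $(r,s)$, exactly which exponents $\delta$ satisfy $D_\delta(\ell^r)\subset\ell^s$; by the closed graph theorem such an inclusion is then automatically bounded, although we will not need this.

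For the sufficiency (the ``$\Leftarrow$'' implications) I would argue directly. If $r\le s$ and $\gamma\le\beta$, then $\delta\le 0$, so $w_n^\delta\le 1$, whence $\|D_\delta a\|_{\ell^s}\le\|D_\delta a\|_{\ell^r}\le\|a\|_{\ell^r}$, using $\|\cdot\|_{\ell^s}\le\|\cdot\|_{\ell^r}$ for $r\le s$; this gives ``$\Leftarrow$'' in (1). If $r>s$, I would apply H\"older's inequality to $\sum_n|a_n|^sw_n^{\delta s}$ with the conjugate exponents $r/s$ and $r/(r-s)$, obtaining
$$\|D_\delta a\|_{\ell^s}^{\,s}\;\le\;\Big(\sum_n|a_n|^r\Big)^{s/r}\Big(\sum_n w_n^{\delta sr/(r-s)}\Big)^{(r-s)/r},$$
and the last series converges precisely when $\delta sr/(r-s)<-1$, i.e. $\delta<\tfrac1r-\tfrac1s$, i.e. $\beta-\gamma>\tfrac1s-\tfrac1r$; this gives ``$\Leftarrow$'' in (2).

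For the necessity (the ``$\Rightarrow$'' implications) I would produce explicit counterexamples. If $r\le s$ and $\gamma>\beta$, so $\delta>0$, take $(a_n)$ supported on the lacunary set $\{2^k:k\ge1\}$ with $a_{2^k}=2^{-k\delta}$: then $\sum_k|a_{2^k}|^r=\sum_k 2^{-k\delta r}<\infty$ while $\sum_k|a_{2^k}|^sw_{2^k}^{\delta s}\ge\sum_k 2^{-k\delta s}2^{k\delta s}=\sum_k 1=\infty$, so $D_\delta a\notin\ell^s$. If $r>s$ and $\beta-\gamma\le\tfrac1s-\tfrac1r$, so $\delta\ge\tfrac1r-\tfrac1s$, take $a_n=(1+|n|)^{-1/r}\big(\log(2+|n|)\big)^{-1/s}$: then $\sum_n|a_n|^r\asymp\sum_{n\ge1}n^{-1}(\log n)^{-r/s}<\infty$ since $r/s>1$, whereas
$$\sum_n|a_n|^sw_n^{\delta s}\;\asymp\;\sum_{n\ge1}n^{-s(1/r-\delta)}(\log n)^{-1},$$
where $s(1/r-\delta)\le 1$; if $s(1/r-\delta)<1$ the exponent of $n$ exceeds $-1$ and the series diverges regardless of the logarithmic factor, while if $s(1/r-\delta)=1$ (the boundary case $\delta=\tfrac1r-\tfrac1s$) the series is $\asymp\sum_n n^{-1}(\log n)^{-1}=\infty$. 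Hence $D_\delta a\notin\ell^s$, which gives the contrapositives of ``$\Rightarrow$'' in (1) and (2).

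I expect no genuine obstacle here; the only slightly delicate point is the exponent bookkeeping in the borderline regime $r>s$, $\beta-\gamma=\tfrac1s-\tfrac1r$, which is precisely why the logarithmic refinement is needed in the counterexample there. Since this lemma is quoted from \cite{LMF}, one may alternatively just cite it, but the argument above is short and self-contained.
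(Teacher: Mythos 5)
Your proposal is correct. Note, however, that the paper itself does not prove this lemma: it is quoted verbatim from the reference \cite{LMF} (the author's companion paper), so there is no in-text argument to compare against. Your self-contained proof is the standard one and is complete: the reduction via the isometries $(c_n)\mapsto(c_n(1+|n|)^\beta)$ to the question of when the diagonal operator $D_\delta$, $\delta=\gamma-\beta$, maps $\ell^r(\Z)$ into $\ell^s(\Z)$ is legitimate (the Fourier coefficients of any element of $\ell^p_\beta$ have at most polynomial growth, so the identification with distributions is harmless); the sufficiency arguments (monotonicity of $\ell^p$ norms when $r\le s$, H\"older with exponents $r/s$ and $r/(r-s)$ when $r>s$) give exactly the stated thresholds; and the two counterexamples --- the lacunary sequence $a_{2^k}=2^{-k\delta}$ for the case $\delta>0$, $r\le s$, and the Bertrand-type sequence $a_n=(1+|n|)^{-1/r}(\log(2+|n|))^{-1/s}$ for the borderline and supercritical cases when $r>s$ --- are correctly calibrated, including the endpoint $\beta-\gamma=\tfrac1s-\tfrac1r$ where the logarithmic factor is genuinely needed. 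The only remark worth making is that you do not even need the closed graph theorem, as you observe: the set-theoretic inclusion is all that is claimed, and your counterexamples refute it directly.
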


\bigskip
For $p \neq 1$, the dual space of $A^p_\beta(\T)$ can be identified with $A^q_{-\beta}(\T)$ ($q=\frac{p}{p-1}$) by the following formula
$$\langle S,T \rangle = \sum_{n \in \Z} \widehat{S}(n) \widehat{T}(-n), \qquad S \in A^p_\beta(\T),~ T \in A^q_{-\beta}(\T).$$

A distribution $S$ on $\T$ is said to vanish on an open set $V$, if $\langle S,\varphi \rangle=0$ for all functions $\varphi \in C^\infty(\T)$  with support in $V$. The support of the distribution $S$, denoted by $\supp(S)$, is the complement of the maximal open set on which the distribution $S$ vanishes. When $f$ is continuous, we denote by $\cZ(f)$ the zero set of the function $f$. For $\delta \geq 0$ we recall that $\Lip_\delta(\T)$ denotes the set of functions $f$ defined on $\T$ which satisfies \eqref{Lipdef}.

We need the following proposition which gives a necessary condition for a function $f \in \Lip_\delta(\T)$ to be bicyclic in $A^p(\T)$.

\begin{prop}[\cite{HER}] \label{regCycl} Let $1 < p < \infty$, $q = p/(p-1)$ and $f \in A(\T)$. If there exists $\delta > 0$ such that $f \in \Lip_\delta(\T)$ and if there exists $S \in A^q(\T) \setminus \{0\}$ such that $\supp(S) \subset \cZ(f)$ then $f$ is not bicyclic in $A^p(\T)$.
\end{prop}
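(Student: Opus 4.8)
The plan is to argue by duality. Since $p\neq1$, the dual of $\A{p}$ is $\A{q}$ for the pairing $\langle\cdot,\cdot\rangle$ recalled above, so $f$ fails to be bicyclic in $\A{p}$ exactly when there is a nonzero $T\in\A{q}$ with $\langle z^{n}f,T\rangle=0$ for all $n\in\Z$; expanding Fourier coefficients gives $\langle z^{n}f,T\rangle=\widehat{fT}(-n)$, where $fT\in\A{q}$ is the product from Section~2, so it suffices to produce $T\in\A{q}\setminus\{0\}$ with $fT=0$. The natural candidate is $T=S$, and the point is that $fS=0$ is \emph{not} automatic from $\supp(S)\subset\cZ(f)$ — a distribution of positive order is not killed by multiplication by a function merely vanishing on its support — so the Lipschitz regularity of $f$ has to enter here. (One may also assume $\cZ(f)$ has Lebesgue measure zero: otherwise a cyclicity sequence $1-P_nf\to0$ in $\A{p}\subset L^{2}(\T)$, $1<p\le2$, would force $P_nf\to1$ a.e., impossible since $P_nf=0$ on $\cZ(f)$.)

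To prove $fS=0$ I would first note that $\widehat S\in\ell^{q}$ with $q<\infty$, so dividing the coefficients of $S$ by $n$ lands in $\ell^{1}$ (Hölder, as $q'=p$); hence $S$ has order at most one and extends to a bounded functional on $\A{p}$. Using $\langle fS,\varphi\rangle=\langle S,f\varphi\rangle$ for $\varphi\in C^{\infty}(\T)$, the identity $fS=0$ is equivalent to: $S$ annihilates every $g\in\Lip_{\delta}(\T)\cap A(\T)$ with $g|_{\cZ(f)}=0$ (apply it with $g=f\varphi$). Fix such a $g$ and take smooth cut-offs $\chi_{\varepsilon}$ equal to $1$ on the $\varepsilon$-neighbourhood of $\cZ(f)$, supported in the $2\varepsilon$-neighbourhood, with $|\chi_{\varepsilon}'|\lesssim\varepsilon^{-1}$. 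Then $g(1-\chi_{\varepsilon})$ vanishes near $\cZ(f)\supset\supp(S)$; mollifying it gives $C^{\infty}$ functions still supported off $\cZ(f)$ and converging to it in $\A{p}$ (as $g(1-\chi_{\varepsilon})\in A(\T)\subset\A{p}$ and $S$ is $\A{p}$-continuous), so $\langle S,g(1-\chi_{\varepsilon})\rangle=0$. Hence $\langle S,g\rangle=\langle S,g\chi_{\varepsilon}\rangle$ for every $\varepsilon>0$, and it remains to let $\varepsilon\to0$.

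The crux — and the step I expect to be the main obstacle — is to show $\|g\chi_{\varepsilon}\|_{\A{p}}\to0$. Since $g$ vanishes on $\cZ(f)$ with a Hölder rate, $|g(\zeta)|\lesssim\dist(\zeta,\cZ(f))^{\delta}$, one has $\|g\chi_{\varepsilon}\|_{\infty}\lesssim\varepsilon^{\delta}$ with $g\chi_{\varepsilon}$ supported in the $2\varepsilon$-neighbourhood of $\cZ(f)$; when $p\geq2$ this already suffices, since $\|g\chi_{\varepsilon}\|_{\A{p}}\leq\|g\chi_{\varepsilon}\|_{L^{2}(\T)}\lesssim\varepsilon^{\delta}$. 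For $1<p<2$ — the range relevant to this paper — the $\ell^{p}$-norm of the Fourier coefficients is not dominated by an $L^{r}$-norm, and one has to combine the localization of the support (and the shrinking of the $\varepsilon$-neighbourhood, using $|\cZ(f)|=0$) with the $\Lip_{\delta}$ regularity of $g\chi_{\varepsilon}$ through a Bernstein-type estimate — splitting $\widehat{g\chi_{\varepsilon}}$ into low and high frequencies at the scale fixed by $\|g\chi_{\varepsilon}\|_{1}$ and the $\Lip_{\delta}$ bound — possibly after replacing $f$ by a power $f^{N}$ to improve the vanishing rate to $\dist(\cdot,\cZ(f))^{N\delta}$ (note $[f^{N}]_{\Z}^{\A{p}}\subset[f]_{\Z}^{\A{p}}$, so non-bicyclicity of $f^{N}$ yields that of $f$). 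Making this quantitative estimate work is the technical heart. Once $fS=0$ is established, $S$ is a nonzero element of $(\A{p})^{\ast}$ vanishing on $[f]_{\Z}^{\A{p}}$, hence $[f]_{\Z}^{\A{p}}\neq\A{p}$, i.e.\ $f$ is not bicyclic in $\A{p}$.
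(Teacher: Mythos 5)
First, a point of reference: the paper does not prove this proposition at all --- it is imported verbatim from Herz \cite{HER} --- so the comparison is with the classical argument. Your overall strategy (duality between $\A{p}$ and $\A{q}$, reduction to producing a nonzero $T\in\A{q}$ with $fT=0$, cut-offs $\chi_\varepsilon$ around $\cZ(f)$, and a quantitative bound on $\|g\chi_\varepsilon\|_{\A{p}}$) is indeed the standard Herz/Beurling--Pollard route, and the soft steps are fine: the identity $\langle z^nf,T\rangle=\widehat{fT}(-n)$, the mollification argument giving $\langle S,g(1-\chi_\varepsilon)\rangle=0$, and the reduction to $|\cZ(f)|=0$ for $1<p\le 2$ (for $p\ge 2$ the whole statement is easy anyway, since then $S\in\A{q}\subset L^2(\T)$ is a function supported on $\cZ(f)$ and $fS=0$ pointwise). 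But the proof is not complete: the estimate $\|g\chi_\varepsilon\|_{\A{p}}\to0$ for $1<p<2$, which you yourself flag as ``the technical heart'', is precisely where the entire content of the proposition lives, and you only gesture at it. If one carries out the dyadic splitting you sketch (H\"older on frequencies $|n|\le N$ against $\|g\chi_\varepsilon\|_{L^2}$, the $L^2$-modulus of continuity on $|n|>N$), the argument closes only when $\delta>(2-p)/p$; for small $\delta$ the high-frequency tail diverges, so an additional device is genuinely required, not merely ``possibly'' helpful.

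Second, the device you propose --- passing to $f^{N}$ --- is invoked with the implication reversed. From $f^{N}=f\cdot f^{N-1}$ and $f^{N-1}\in A(\T)$ one gets $[f^{N}]_{\Z}^{\A{p}}\subset[f]_{\Z}^{\A{p}}$, hence bicyclicity of $f^{N}$ implies that of $f$; it does \emph{not} follow that non-bicyclicity of $f^{N}$ yields that of $f$, since a nonzero annihilator of the smaller subspace $[f^{N}]_{\Z}^{\A{p}}$ says nothing about the larger one. The repair is elementary but must be stated: once $f^{N}S=0$ is established for some $N$, let $k\ge0$ be maximal with $f^{k}S\neq0$ (it exists because $f^{0}S=S\neq0$ and $f^{N}S=0$), and set $T=f^{k}S\in\A{q}\setminus\{0\}$; then $fT=f^{k+1}S=0$, so $T$ annihilates every $z^nf$ and $f$ is not bicyclic. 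With that fix, and with the quantitative estimate actually carried out for $f^{N}$ --- where the vanishing rate improves to $\dist(\cdot,\cZ(f))^{N\delta}$ but the H\"older exponent does not, and the Lipschitz constant $\varepsilon^{-1}$ of $\chi_\varepsilon$ must be absorbed by choosing $N$ large --- your outline becomes Herz's proof. As written, it has both a missing core estimate and a broken logical step.
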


\section{No characterization of cyclicity in terms of zero set}

The following Theorem is a reformulation of Theorem \ref{levsuite} in $A^p(\T)$.

\begin{theo}\label{levmainresult}
Let $1 < p < 2$. There exist $f$ and $g$ in $A(\T)$ such that $\cZ(f)=\cZ(g)$, $ \log |f| \not \in L^1(\T)$, $\log |g| \not \in L^1(\T)$ with $f$ non-cyclic in $A^p(\T)$ and $g$ cyclic in $A^p(\T)$.
\end{theo}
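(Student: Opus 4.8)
The plan is to build both functions $f$ and $g$ from a single carefully chosen closed set $E=\cZ(f)=\cZ(g)\subset\T$, and to exploit the asymmetry between $\ell^p$ and its dual $\ell^q$ ($1<p<2<q<\infty$) that already appears in Proposition \ref{regCycl}. For the non-cyclic vector $f$ I would arrange that $f\in\Lip_\delta(\T)$ for some $\delta>0$ and that there exists a nonzero distribution (indeed a measure or smooth density) $S\in A^q(\T)$ with $\supp(S)\subset E$; then Proposition \ref{regCycl} shows $f$ is not bicyclic in $A^p(\T)$, hence a fortiori not cyclic. For the cyclic vector $g$ I would keep the same zero set $E$ but make $g$ vanish so slowly away from $E$ (and be regular enough) that no nonzero element of $A^q(\T)$ can be supported on $E$, and then verify cyclicity directly via Proposition \ref{base}, producing trigonometric polynomials $P_n$ with $\|1-P_ng\|_{A^p}\to 0$ and analytic polynomials $Q_n$ with $\|g-zQ_ng\|_{A^p}\to 0$.

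Concretely, I would take $E$ to be a Cantor-type set of Lebesgue measure zero which is \emph{not} a Carleson set, i.e. with $\sum_n|I_n|\log|I_n|=-\infty$ over its complementary arcs $(I_n)$; this forces $\log d(\cdot,E)\notin L^1(\T)$. Setting $f$ to be a function in $\Lip_\delta(\T)$ with $\cZ(f)=E$ and $|f|\asymp d(\cdot,E)$ near $E$ (a standard construction, e.g. a suitable sum over the arcs $I_n$), the remark in the excerpt gives $\log|f|\notin L^1(\T)$, and $f\in A(\T)$ since $\Lip_\delta\subset A(\T)$ for $\delta>1/2$; choosing $\delta$ close enough to $1$ also guarantees $f\in A^q(\T)$ is reachable by a companion distribution. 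The existence of the needed $S\in A^q(\T)\setminus\{0\}$ with $\supp S\subset E$ is where I would invoke the theory of sets of uniqueness/multiplicity: for the Hausdorff dimension or thickness regime in which $E$ is a set of multiplicity for $A^q(\T)$ (equivalently carries a nonzero pseudomeasure/distribution in $A^q$), such $S$ exists — this is the analogue of the Salem–Zygmund and Körner type results, and picking $E$ with $\dim E$ large enough (but still measure zero) secures it.

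For the cyclic $g$ I would instead let $g=\prod$ or $g=$ an outer-type function whose modulus tends to $0$ at $E$ only logarithmically, e.g. $|g|\asymp 1/\log(1/d(\cdot,E))$ near $E$, keeping $\cZ(g)=E$ and $\log|g|\notin L^1(\T)$ (automatic here since $g$ still vanishes on a non-Carleson set, or can be forced directly). Because $g$ decays so slowly, the multiplication operators $f\mapsto P_ng$ can be made to approximate $1$: one multiplies by polynomials $P_n$ that are $\approx 1/g$ on the bulk of $\T$ and are tempered near $E$, and the non-Carleson condition is exactly what lets the resulting error go to $0$ in $A^p$ — this is the $\ell^p$ (not $\ell^2$) analogue of the classical approximation, and is also where the strict inequality $p<2$ is used, since the dual exponent $q>2$ makes $A^q(\T)$ \emph{small enough} that no nonzero distribution supported on a thin-but-positive-dimensional $E$ obstructs cyclicity of $g$ while still allowing one to obstruct $f$ by a different (rougher near $E$) choice. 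The second condition $\|g-zQ_ng\|_{A^p}\to 0$ follows from \eqref{equitop}–\eqref{caraCyclNorm} once bicyclicity is in hand, because $g$ can be taken with no "inner part" obstruction (e.g. $g$ real-valued positive away from $E$, so $\bar z g\in[g]_{\N_0}$).

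The main obstacle, and the heart of the argument, is the simultaneous control: one must choose the single set $E$ (its dimension, and the fine distribution of the lengths $|I_n|$) so that it is \emph{at once} a set of multiplicity for $A^q(\T)$ — giving the distribution $S$ that kills cyclicity of $f$ via Proposition \ref{regCycl} — and, for the slowly-vanishing $g$, \emph{not} an obstruction to cyclicity in $A^p(\T)$, together with an explicit polynomial approximation scheme realizing $\|1-P_ng\|_{A^p}\to 0$. Balancing these requires a quantitative estimate of $\|P_ng\|_{A^p}$ in terms of the moduli of continuity of $g$ and of the geometry of $E$, and the delicate point is that the Fourier-side norm $A^p$ with $p<2$ is not governed by $|g|$ alone (unlike $L^2$), so the construction of $g$ and of the $P_n$ must be done hand in hand. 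I expect the rest — checking $f,g\in A(\T)$, $\cZ(f)=\cZ(g)=E$, and $\log|f|,\log|g|\notin L^1(\T)$ — to be routine given the non-Carleson choice of $E$.
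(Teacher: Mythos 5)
Your treatment of the non-cyclic function $f$ is essentially sound and parallels the paper: take a smooth/Lipschitz $f$ vanishing exactly on a measure-zero set $E$ that supports a nonzero $S\in A^q(\T)$, and apply Proposition \ref{regCycl}. But the construction of the cyclic $g$ — which you yourself identify as ``the heart of the argument'' — is where the proposal has a genuine gap, and it is not a gap that your scheme can close. The difficulty is precisely the tension you name but do not resolve: the \emph{same} distribution $S\in A^q(\T)\setminus\{0\}$ with $\supp(S)\subset E$ that obstructs $f$ is a candidate annihilator of $[g]_\Z^{A^p(\T)}$, since any annihilator of the translates of $g$ must be supported on $\cZ(g)=E$ and, conversely, $S$ fails to annihilate them only if the ``product'' $Sg$ is nonzero — a delicate statement when $g$ vanishes on $\supp(S)$. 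Your proposed mechanism (an outer-type $g$ with $|g|\asymp 1/\log(1/d(\cdot,E))$, polynomials $P_n\approx 1/g$ away from $E$, with the non-Carleson condition absorbing the error) gives no control of $\|1-P_ng\|_{A^p}$: for $p<2$ this norm is not governed by $|g|$ and a modulus of continuity, and the non-Carleson condition $\log d(\cdot,E)\notin L^1(\T)$ is only a necessary condition for cyclicity, nowhere near sufficient. Likewise, deducing $\|g-zQ_ng\|_{A^p}\to 0$ from ``$g$ real and positive off $E$'' is not an argument; the paper's own sufficient conditions for $[g]_{\N_0}=[g]_\Z$ (Proposition \ref{infimum} and the theorem following it) require Lipschitz regularity $\delta>1/p-1/2$, which your log-slowly-vanishing $g$ does not have.

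The paper resolves this by a mechanism entirely absent from your proposal: it takes $E$ to contain (indeed, to be) a \emph{Helson set} $K$ supporting a nonzero $S\in A^q(\T)$ (Lev--Olevskii, Theorem \ref{LevHelson}), and produces the cyclic $g\in\mathcal{I}(K)$ non-constructively, by a Baire category argument in $\mathcal{I}(K)$. The density step hinges on two inputs you do not have: the density of cyclic vectors in $\mathcal{I}(\{1\})$ (Abakumov--Atzmon--Grivaux) and, crucially, Lemma \ref{lemme10}, which uses the Helson property to extend $h|_K$ to a function $f\in A(\T)$ with controlled $A^1$ norm but \emph{arbitrarily small} $A^p$ norm; subtracting this extension from a cyclic $h$ yields an element of $\mathcal{I}(K)$ that is still almost cyclic in $A^p(\T)$. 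Without the Helson property of $E$ there is no known way to perform this correction, so a generic non-Carleson Cantor set of multiplicity for $A^q(\T)$ will not do. To repair the proposal you would need to replace your explicit outer-function construction of $g$ by this Helson-set/Baire-category route (or supply a genuinely new quantitative argument for the polynomial approximation, which is not sketched here).
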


For the proof we need some notations and definitions. Let $E$ be a closed subset of $\T$. We set $$\mathcal{I}(E)=\{ f \in A(\T),~ f|_E=0 \}$$ and we denote by $\mathcal{J}(E)$ the closure in $A(\T)$ of the set of functions in $A(\T)$ which vanish on a neighborhood of $E$. Clearly $\mathcal{J}(E) \subset \mathcal{I}(E)$. We say that $E$ is a set of synthesis if $\mathcal{J}(E) = \mathcal{I}(E)$. Since $\{ 1 \}$ is a set of synthesis (see \cite[Theorem IV]{KAH} p. 123 or \cite[Appendix 3]{GRA} pp. 416-418), we have $\mathcal{I}(\{ 1 \})=\mathcal{J}(\{ 1 \})$. Unlike the space $A(\T)$, $\mathcal{I}(\{ 1 \})$ contains cyclic vectors. Moreover, by \cite[Proposition 2]{ABA}, the set of cyclic vectors in $\mathcal{I}(\{ 1 \})$ is a dense $G_\delta$ subset of $\mathcal{I}(\{ 1 \})$.

\begin{lemme} \label{cycliciteA0Ap}
Let $f \in \mathcal{I}(\{ 1 \})$. If $f$ is cyclic in $\mathcal{I}(\{ 1 \})$ then $f$ is cyclic in $A^p(\T)$ for all $p>1$.
\end{lemme}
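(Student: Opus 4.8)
The plan is to deduce cyclicity of $f$ in $A^p(\T)$ directly from the hypothesis, via a short chain of reductions, rather than reproving anything from scratch. Since $f$ is cyclic in $\mathcal{I}(\{1\})$, the linear span of $\{z^nf : n \geq 0\}$ is dense in $\mathcal{I}(\{1\})$ for the norm of $A(\T)=A^1(\T)$. Because $\|g\|_{A^p}\leq\|g\|_{A^1}$ for every $g$ (the elementary inequality $\|\cdot\|_{\ell^p}\leq\|\cdot\|_{\ell^1}$, also contained in Lemma~\ref{inclusionAp}(1)), this span is a fortiori dense in $\mathcal{I}(\{1\})$ for the coarser norm of $A^p(\T)$; hence $\mathcal{I}(\{1\})\subseteq[f]_{\N_0}^{A^p(\T)}$. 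As $[f]_{\N_0}^{A^p(\T)}$ is by definition a closed subspace of $A^p(\T)$, it will follow that $[f]_{\N_0}^{A^p(\T)}=A^p(\T)$, i.e. that $f$ is cyclic in $A^p(\T)$, as soon as I show that $\mathcal{I}(\{1\})$ is dense in $A^p(\T)$.

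That density statement is the only real content, and it is here that the hypothesis $p>1$ is used. For $N\geq 1$ set $g_N = 1 - \frac{1}{N}\sum_{k=1}^{N}z^k$, a trigonometric polynomial vanishing at $1$, so $g_N\in\mathcal{I}(\{1\})$. A one-line Fourier-coefficient computation gives $\|1-g_N\|_{A^p}=\| \frac{1}{N}\sum_{k=1}^{N} z^k \|_{A^p}=N^{1/p-1}$, which tends to $0$ precisely because $p>1$; thus the constant $1$ lies in the $A^p(\T)$-closure of $\mathcal{I}(\{1\})$. Since $\mathcal{I}(\{1\})$ is invariant under multiplication by $z^j$ for every $j\in\Z$ (this does not change the value at the point $1$), and multiplication by $z^j$ is isometric on $A^p(\T)$, every monomial $z^j$ lies in that closure too; being a linear subspace, the closure then contains $\PT$, which is dense in $A^p(\T)$. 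Hence $\mathcal{I}(\{1\})$ is dense in $A^p(\T)$, and the proof is complete.

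The crux --- the only place where an estimate is used in an essential way --- is $\| \frac{1}{N}\sum_{k=1}^{N} z^k \|_{A^p}=N^{1/p-1}\to 0$: the failure of Fejér-type means to stay bounded away from $0$ in $A^p(\T)$ is exactly what separates $p>1$ from $p=1$, and it is consistent with there being no cyclic vectors in $A(\T)$. Everything else (transfer of density along the contraction $A^1(\T)\hookrightarrow A^p(\T)$, closedness of $[f]_{\N_0}^{A^p(\T)}$, density of $\PT$ in $A^p(\T)$ for $p<\infty$) is routine. One may also phrase the argument through Proposition~\ref{base}: take $Q_n\in\PTp$ with $\|f-zQ_nf\|_{A^p}\to 0$ from $f\in[f]_{\N}^{\mathcal{I}(\{1\})}$, which holds by \eqref{equitop} since $f$ cyclic in $\mathcal{I}(\{1\})$ forces $[f]_{\N_0}^{\mathcal{I}(\{1\})}=[f]_{\Z}^{\mathcal{I}(\{1\})}$; and take $P_n\in\PTp\subset\PT$ approximating $1$ in $A^p(\T)$ by first approximating each $g_N$ by elements of $\mathrm{span}\{z^kf:k\geq 0\}$ in $A^1(\T)$ and then invoking $\|1-g_N\|_{A^p}\to 0$.
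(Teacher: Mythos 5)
Your proof is correct. The decisive point is the same as in the paper's own proof --- namely that the constant function $1$ lies in the $A^p(\T)$-closure of $\mathcal{I}(\{ 1 \})$ precisely because $p>1$ --- but you organize the argument differently. The paper routes everything through Proposition \ref{base}: it approximates $1$ in $A^p$-norm by the rational functions $h_k(z)=(z-1)/(z-1-1/k)\in\mathcal{I}(\{ 1 \})$, with $\|1-h_k\|_{A^p}^p=((k+1)^p-k^p)^{-1}\to 0$, then uses cyclicity of $f$ in $\mathcal{I}(\{ 1 \})$ to approximate $h_k$ by $Pf$ in $A^1$-norm, and separately produces $Q$ with $\|f-zQf\|_{A^p}<\varepsilon$. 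You instead argue directly from the definition of cyclicity: the $A^1$-density of $\mathrm{span}\{z^nf:\ n\geq 0\}$ in $\mathcal{I}(\{ 1 \})$ transfers to $A^p$-density since $\|\cdot\|_{A^p}\leq\|\cdot\|_{A^1}$, and $\mathcal{I}(\{ 1 \})$ is itself $A^p$-dense in $A^p(\T)$ because $\|1-g_N\|_{A^p}=N^{1/p-1}\to 0$ for your polynomial approximants and because $\mathcal{I}(\{ 1 \})$ is invariant under the isometries of multiplication by $z^j$. This bypasses Proposition \ref{base} entirely --- the condition $\|f-zQf\|_{A^p}\to 0$ never has to be verified separately --- and isolates a clean intermediate statement (density of $\mathcal{I}(\{ 1 \})$ in $A^p(\T)$ for $p>1$) that makes the role of $p>1$ transparent; the paper's route is marginally shorter in that Proposition \ref{base} only requires approximating the single function $1$, so no translation step is needed. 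Your closing alternative phrasing via Proposition \ref{base} is essentially the paper's proof with $g_N$ in place of $h_k$.
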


\begin{proof}
Let $f \in \mathcal{I}(\{ 1 \})$ cyclic in $\mathcal{I}(\{ 1 \})$ and $\varepsilon > 0$. We denote by $h_k$, for $k \in \N$, the function defined on $\T$ by $$h_k(z)=\frac{z-1}{z-1-1/k}.$$
Clearly $h_k \in \mathcal{I}(\{ 1 \})$. Moreover for $z \in \T$,
$$h_k(z) = 1 - \frac{1}{k+1} \sum_{n=0}^\infty \frac{z^n}{(1+1/k)^n}.$$
Thus for $p > 1$, we have
\begin{eqnarray*}
\| 1 - h_k \|_{A^p}^p &=& \frac{1}{(k+1)^p} \sum_{n=0}^\infty \frac{1}{(1+1/k)^{pn}}\\
&=& \frac{1}{(k+1)^p} \dfrac{1}{1-\frac{1}{(1+1/k)^p}}\\
&=& \frac{1}{(k+1)^p-k^p} ~~\underset{k \to \infty}{\sim}~~ \frac{1}{pk^{p-1}}.
\end{eqnarray*}
So for fixed $p>1$, we choose $k \in \N$ such that $\| 1 - h_k \|_{A^p} < \varepsilon$.
Since $f$ is cyclic in $\mathcal{I}(\{ 1 \})$, there exist $P$ and $Q$ in $\PTp$ such that
$$\|h_k-Pf\|_{A^1} < \varepsilon ~~~\text{ and }~~~ \|f-zQf\|_{A^1} < \varepsilon.$$
We obtain that $\|f-zQf\|_{A^p} < \varepsilon$ and
$$\|1-Pf\|_{A^p} \leq \| 1 - h_k \|_{A^p} + \|h_k-Pf\|_{A^1} < 2\varepsilon.$$
This proves, by Proposition \ref{base}, that $f$ is cyclic in $A^p(\T)$ for all $p>1$.
\end{proof}

To prove our main theorem of this section, we need to introduce the notion of Helson sets.

\begin{Def}[\cite{KAH}, Chap. XI] \label{Helson}
A compact set $K \subset \T$ is called a Helson set if it satisfies any one of the following equivalent conditions:
\begin{enumerate}[(i)]
\item if $f \in \mathcal{C}(K)$ then there exists $g \in \A{}$ such that $g|_K = f$,
\item there exists $\delta_1 > 0$ such that for all $\mu \in \MT$ satisfying $\supp(\mu) \subset K$ we have
$$\underset{n\in\Z}{\sup} ~ |\hat{\mu}(n)| \geq \delta_1 \int_\T |\mathrm{d} \mu|,$$
\item there exists $\delta_2 > 0$ such that for all $\mu \in \MT$ satisfying $\supp(\mu) \subset K$ we have
$$\underset{|n|\to\infty}{\limsup}~ |\hat{\mu}(n)| \geq \delta_2 \int_\T |\mathrm{d} \mu|.$$
\end{enumerate}
\end{Def}

By $(iii)$, a Helson set does not support any non zero measure $\mu$ such that $\widehat{\mu}(n) \to 0$ as $|n| \to \infty$. On the other hand K\"orner shows, in \cite{KOR}, that there exists a Helson set which support a non zero distribution $S$ such that $\widehat{S}(n) \to 0$ as $|n| \to \infty$ (see also Kaufman in \cite{KAU} for a simple proof). Lev and Olevskii extend significantly this result by showing the following theorem which is the key of their result about bicyclicity in $\ell^p(\Z)$.

\begin{theo}[\cite{LEV}, Theorem 3] \label{LevHelson}
For $q > 2$, there exists a Helson set $K$ which supports a non-zero distribution $S \in A^q(\T)$.
\end{theo}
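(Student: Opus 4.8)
The plan is to construct the set $K$ and the distribution $S$ simultaneously by an iterative (and ultimately probabilistic) procedure, producing a decreasing sequence of finite unions of arcs $E_1 \supset E_2 \supset \cdots$ with $K = \bigcap_m E_m$, together with approximating objects (trigonometric polynomials or atomic measures) $S_m$ satisfying $\supp(S_m) \subset E_m$ that converge in $\DT$ to a non-zero limit $S$. Two quantities must be controlled along the construction: first, the quantity $\big(\sum_n |\widehat{S}(n)|^q\big)^{1/q}$, which must stay finite so that $S \in \A{q}$; and second, a lower bound of the form $\sup_{n} |\widehat{\mu}(n)| \geq \delta \int_\T |\mathrm{d}\mu|$, valid for every $\mu \in \MT$ with $\supp(\mu) \subset K$ and with $\delta > 0$ independent of the stage, so that $K$ is a Helson set by condition (ii) of Definition \ref{Helson}.

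For the distribution itself, I would build $S$ as a weak-$\ast$ limit of measures supported on the $E_m$, inserting independent random signs (or random phases) at each stage in the spirit of Salem--Zygmund. A first- and second-moment computation then controls the expected size of the Fourier coefficients, and a Borel--Cantelli argument shows that with positive probability $(\widehat{S}(n))_n \in \ell^q$. The restriction $q > 2$ enters here in an essential way: the $\ell^2$ endpoint is governed by Parseval and would force $S$ to be an $L^2$ function, but Helson sets have Lebesgue measure zero, so a non-zero $L^2$ function cannot be supported on one. The slack afforded by $q > 2$ lets the coefficients decay just fast enough to lie in $\ell^q$ while still refusing to be summable or bounded below in the measure sense. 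Since $(\widehat{S}(n))_n \in \ell^q$ forces $\widehat{S}(n) \to 0$, condition (iii) already guarantees that $S$ cannot be a non-zero measure, which is exactly the phenomenon being sought.

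The genuinely hard part is the Helson estimate. To verify condition (ii) I would hard-wire an arithmetic structure into the placement of the arcs of each $E_m$: the arcs at stage $m$ should be positioned so that, for a suitably chosen frequency $n_m$, the character $\zeta^{n_m}$ is nearly constant on each individual arc yet the arcs are spread out in phase across the circle, in a dissociate or lacunary fashion. Testing an arbitrary $\mu$ supported on $K \subset E_m$ against such a character then recovers a definite proportion of its total variation, yielding the uniform lower bound with a fixed $\delta$. The whole technical heart of the argument is the quantitative bookkeeping that reconciles this with the $\ell^q$ control of the previous step: the Helson condition pushes the construction to be arithmetically spread out and thin, while membership in $\A{q}$ demands that enough mass be retained at every scale, and these two pressures act in opposite directions. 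Balancing them with constants that do not degrade as $m \to \infty$ is precisely where Lev and Olevskii improve on the earlier constructions of K\"orner and Kaufman, which only reached $\widehat{S}(n) \to 0$ rather than $(\widehat{S}(n))_n \in \ell^q$.

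Finally I would pass to the limit: $K = \bigcap_m E_m$ is compact, $S_m \to S \neq 0$ in $\DT$ with $\supp(S) \subset K$, the bound $\sum_n |\widehat{S}(n)|^q < \infty$ survives by Fatou's lemma, and since any measure supported on $K$ is in particular supported on every $E_m$, the stagewise lower bounds transfer to $K$ to give the Helson property. This exhibits a Helson set $K$ supporting the non-zero distribution $S \in \A{q}$, as required.
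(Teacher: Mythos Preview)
The paper does not contain a proof of this statement: Theorem~\ref{LevHelson} is quoted verbatim from \cite{LEV} and used as a black box in the proof of Theorem~\ref{levmainresult}. There is therefore no ``paper's own proof'' to compare your attempt against.

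As for your proposal itself, it is not a proof but a narrative outline of the kind of construction one expects, and you say as much: you identify the ``genuinely hard part'' and the ``technical heart of the argument'' without carrying either out. The broad strokes you describe --- an iterative nested construction $E_1\supset E_2\supset\cdots$, randomisation to control Fourier decay, and an arithmetic/dissociate placement of arcs to force the Helson lower bound uniformly in the stage --- are indeed in the spirit of the K\"orner--Kaufman--Lev--Olevskii lineage you cite. But none of the quantitative tension you flag (keeping the Helson constant $\delta$ bounded away from zero while simultaneously driving $\sum_n|\widehat S(n)|^q$ to converge) is actually resolved in what you wrote; you have described \emph{that} these two constraints pull in opposite directions, not \emph{how} to balance them. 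A referee would not accept this as a proof, and neither should you: at best it is a correct high-level summary of the strategy in \cite{LEV}, and the honest thing to do here is exactly what the paper does --- cite the result and move on.
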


We need also the following lemma obtained by Lev and Olevskii.

\begin{lemme}[\cite{LEV}, Lemma 10] \label{lemme10}
Let $K$ be a Helson set. For $\varepsilon >0$, $p > 1$ and $f \in \mathcal{C}(K)$, there exists $g \in \A{}$ such that
$$\left\{
 \begin{array}{rcl} 
 \displaystyle g|_K &=& f, \\ 
 \displaystyle ||g||_{A^1} &\leq&  \displaystyle\frac{1}{\delta_2} ||f||_\infty, \\ 
 \displaystyle ||g||_{A^p} &<& \varepsilon
  \end{array}
   \right. $$
where $\delta_2$ is a constant which satisfies $(iii)$ in the definition \ref{Helson}.
\end{lemme}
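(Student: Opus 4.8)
The strategy is to reduce the lemma to a quantitative extension problem on $A(\T)$ and then to solve that problem by averaging extensions over disjoint blocks of frequencies. For the reduction, note that for any $g\in A(\T)$ and any $p>1$,
\[
\|g\|_{A^p}^p=\sum_{n\in\Z}|\widehat g(n)|^p\le\Big(\sup_{n\in\Z}|\widehat g(n)|\Big)^{p-1}\sum_{n\in\Z}|\widehat g(n)|=\Big(\sup_{n\in\Z}|\widehat g(n)|\Big)^{p-1}\|g\|_{A^1};
\]
so it suffices to produce $g\in A(\T)$ with $g|_K=f$, $\|g\|_{A^1}\le\frac{1}{\delta_2}\|f\|_\infty$ and $\sup_n|\widehat g(n)|$ as small as we wish, after which $\|g\|_{A^p}<\varepsilon$ is automatic.

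The main ingredient is a duality step that rests on condition $(iii)$ of Definition~\ref{Helson} (not $(ii)$). For $N\in\N$ put $A_N=\{g\in A(\T):\widehat g(n)=0\text{ for }|n|\le N\}$ and let $R_N\colon A_N\to\mathcal C(K)$ be $g\mapsto g|_K$. Its adjoint sends $\mu\in\mathcal C(K)^*=\mathcal M(K)$ to the class of $\mu$ in $A(\T)^*$ modulo trigonometric polynomials of degree $\le N$, which has norm $\sup_{|n|>N}|\widehat\mu(n)|$; by $(iii)$ this is at least $\limsup_{|n|\to\infty}|\widehat\mu(n)|\ge\delta_2\int_\T|\mathrm{d}\mu|$, the crucial point being that removing finitely many frequencies does not change the $\limsup$. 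Hence $R_N^*$ is bounded below by $\delta_2$, so $R_N$ is onto and, by the open mapping theorem, for every $h\in\mathcal C(K)$, every $N$ and every $\eta>0$ there is $g\in A(\T)$ with $g|_K=h$, $\widehat g(n)=0$ for $|n|\le N$ and $\|g\|_{A^1}\le\frac{1+\eta}{\delta_2}\|h\|_\infty$; since $\sum_n|\widehat g(n)|<\infty$, truncating the spectrum of $g$ to a large finite band $(N,M]$ alters $g|_K$ by at most $\eta$ in sup norm, with no loss in the $A^1$-bound.

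Now fix a large integer $L$. Applying the previous step $L$ times, each time starting above the top of the previous band, we obtain trigonometric polynomials $h_1,\dots,h_L$ with pairwise disjoint spectra, $\|h_\ell\|_{A^1}\le\frac{1+\eta}{\delta_2}\|f\|_\infty$, and $h_\ell|_K=f-\rho_\ell$ with $\|\rho_\ell\|_\infty$ arbitrarily small. Put $g^{(1)}=\frac{1}{L}\sum_{\ell=1}^Lh_\ell$: the triangle inequality gives $\|g^{(1)}\|_{A^1}\le\frac{1+\eta}{\delta_2}\|f\|_\infty$, while disjointness of the spectra gives $\widehat{g^{(1)}}(n)=\frac{1}{L}\widehat{h_{\ell(n)}}(n)$ (equal to $0$ if $n$ lies in no spectrum), so $\sup_n|\widehat{g^{(1)}}(n)|\le\frac{1}{L}\max_\ell\|h_\ell\|_{A^1}\le\frac{(1+\eta)\|f\|_\infty}{\delta_2L}$, and $g^{(1)}|_K=f-\rho^{(1)}$ with $\|\rho^{(1)}\|_\infty$ small. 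Iterating on the residuals, placing every new frequency above all previous ones and letting the successive sup-norm errors decay geometrically, and summing, we get $g=\sum_kg^{(k)}\in A(\T)$ with $g|_K=f$ by telescoping, with $\|g\|_{A^1}\le\frac{1+\eta}{\delta_2}\|f\|_\infty$ up to a negligible geometric tail, and with $\sup_n|\widehat g(n)|\le\frac{(1+\eta)\|f\|_\infty}{\delta_2L}$ because the stages occupy disjoint blocks. Choosing the errors and $\eta$ small first, then $L$ large, yields the three required properties; the exact constant $\frac{1}{\delta_2}$ is recovered by noting that $(iii)$ only requires $\delta_2$ to be \emph{some} admissible constant, so one may run the whole argument with the optimal admissible constant and absorb the factor $(1+\eta)$.

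I expect the genuine obstacle to be this last step: forcing $\sup_n|\widehat g(n)|$, equivalently $\|g\|_{A^p}$, to be small while keeping $g|_K=f$ exactly and the $A^1$-norm below $\frac{1}{\delta_2}\|f\|_\infty$. The averaging over disjoint high-frequency blocks is exactly what makes this possible, and it is here that condition $(iii)$ is used essentially (in place of $(ii)$): it is the $\limsup$ that allows the blocks to be pushed arbitrarily far out without degrading the extension constant.
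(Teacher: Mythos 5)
The paper does not actually prove this lemma --- it is quoted from Lev--Olevskii \cite{LEV} --- so there is no in-paper proof to compare against; your argument is correct and is essentially the known one: condition $(iii)$ of Definition \ref{Helson} (via the $\limsup$, which is insensitive to deleting finitely many frequencies) gives norm-controlled extensions with spectrum beyond any prescribed $N$, and averaging $L$ such extensions supported on disjoint spectral blocks, with an iterative correction of the residuals, drives $\sup_n|\widehat g(n)|$ and hence $\|g\|_{A^p}$ ($p>1$) to zero while keeping the $A^1$ bound. The only loose end is the constant $(1+\eta)/\delta_2$ versus $1/\delta_2$, which is harmless here: any $\delta_2'<\delta_2$ also satisfies $(iii)$, and the application in the proof of Theorem \ref{Existence} only requires a fixed multiple of $1/\delta_2$.
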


The main result, Theorem \ref{levmainresult}, is a consequence of the following theorem which is an analogue of Lev and Olevskii result. 

\begin{theo} \label{Existence}
Let $K$ be a Helson set on $\T$. There exists $g \in A(\T)$ such that $g$ vanishes on $K$ and such that $g$ is cyclic in $A^p(\T)$ for all $p > 1$.
\end{theo}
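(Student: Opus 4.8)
The plan is to prove, by a Baire category argument, that the set of $g\in\mathcal{I}(K)$ which are cyclic in $A^p(\T)$ for \emph{every} $p>1$ is a dense $G_\delta$ subset of the Banach space $\mathcal{I}(K)$, equipped with the norm of $A(\T)$; its non-emptiness then gives the theorem. First I would reduce to the case $1\in K$: if $K\neq\varnothing$, pick $\zeta_0\in K$; the rotation $g(z)\mapsto g(\zeta_0 z)$ is an isometric automorphism of every $A^p(\T)$ which intertwines the shift up to a unimodular scalar, hence preserves cyclicity, and it maps $\mathcal{I}(\bar\zeta_0 K)$ onto $\mathcal{I}(K)$; since $\bar\zeta_0 K$ is again a Helson set and contains $1$, it suffices to treat it. (For $K=\varnothing$ the assertion already follows from the case $K=\{1\}$, since $\mathcal{I}(\{1\})\subset A(\T)$.)

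Fix a sequence $p_n\downarrow 1$. For $n,m\in\N$ set
$$V_{n,m}=\{g\in\mathcal{I}(K):\ \exists\,P\in\PT,\ \|1-Pg\|_{A^{p_n}}<1/m\},\qquad W_{n,m}=\{g\in\mathcal{I}(K):\ \exists\,Q\in\PTp,\ \|g-zQg\|_{A^{p_n}}<1/m\}.$$
Each is open in $\mathcal{I}(K)$: for a fixed polynomial the displayed inequality persists under a sufficiently small perturbation in the norm of $A(\T)$ (which dominates $\|\cdot\|_{A^{p_n}}$), and an arbitrary union of open sets is open. By Proposition \ref{base}, any $g\in\bigcap_{n,m}(V_{n,m}\cap W_{n,m})$ is cyclic in $A^{p_n}(\T)$ for all $n$, hence — since cyclicity in $A^{p}(\T)$ implies cyclicity in $A^{q}(\T)$ for $q\ge p$ and $p_n\to 1$ — cyclic in $A^p(\T)$ for every $p>1$. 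As $\mathcal{I}(K)$ is a closed subspace of the Banach space $A(\T)$, hence complete, it remains only to show that each $V_{n,m}$ and each $W_{n,m}$ is dense in $\mathcal{I}(K)$.

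That density is the heart of the matter, and is where the Helson hypothesis enters, through Lemma \ref{lemme10}. Fix $h\in\mathcal{I}(K)\subset\mathcal{I}(\{1\})$ and $\delta>0$. By \cite[Proposition 2]{ABA} the cyclic vectors of $\mathcal{I}(\{1\})$ are dense in $\mathcal{I}(\{1\})$, so we may choose $\phi$, cyclic in $\mathcal{I}(\{1\})$, with $\|h-\phi\|_{A^1}$ as small as we please; by Lemma \ref{cycliciteA0Ap}, $\phi$ is cyclic in $A^{p_n}(\T)$, so Proposition \ref{base} furnishes $P\in\PT$ and $Q\in\PTp$ with $\|1-P\phi\|_{A^{p_n}}<1/(2m)$ and $\|\phi-zQ\phi\|_{A^{p_n}}<1/(2m)$. \emph{Only now}, with $P$ and $Q$ frozen, apply Lemma \ref{lemme10} to $\phi|_K\in\mathcal{C}(K)$ with a parameter $\eta$ chosen small relative to $\|P\|_{A^1}$ and $\|Q\|_{A^1}$: this produces $\psi\in A(\T)$ with $\psi|_K=\phi|_K$, $\|\psi\|_{A^1}\le\delta_2^{-1}\|\phi|_K\|_\infty\le\delta_2^{-1}\|\phi-h\|_{A^1}$ and $\|\psi\|_{A^{p_n}}<\eta$. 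Put $g:=\phi-\psi$. Then $g|_K=0$, so $g\in\mathcal{I}(K)$; moreover $\|g-h\|_{A^1}\le\|\phi-h\|_{A^1}+\|\psi\|_{A^1}<\delta$ provided $\|\phi-h\|_{A^1}$ was chosen below $\delta/(1+\delta_2^{-1})$; and $\|1-Pg\|_{A^{p_n}}\le\|1-P\phi\|_{A^{p_n}}+\|P\|_{A^1}\|\psi\|_{A^{p_n}}<1/m$ while $\|g-zQg\|_{A^{p_n}}\le\|\phi-zQ\phi\|_{A^{p_n}}+(1+\|Q\|_{A^1})\eta<1/m$ once $\eta$ is small enough. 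Hence $g\in V_{n,m}\cap W_{n,m}$ with $\|g-h\|_{A^1}<\delta$, which proves density. The main obstacle is exactly this ordering of quantifiers together with the fact that Lemma \ref{lemme10} controls the corrector $\psi$ in the $A^{p_n}$-norm, and not merely in $A^1$: that is what allows one to modify $\phi$ so as to vanish on $K$ while preserving the (only approximate, but robust once $P,Q$ are fixed) cyclicity relations in $A^{p_n}(\T)$.
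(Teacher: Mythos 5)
Your proposal is correct and follows essentially the same route as the paper: a Baire category argument in $\mathcal{I}(K)$ (after reducing to $1\in K$), using the density of cyclic vectors in $\mathcal{I}(\{1\})$, Lemma \ref{cycliciteA0Ap} and Proposition \ref{base} to fix $P,Q$ first, and only then invoking Lemma \ref{lemme10} to produce the corrector with small $A^{p_n}$-norm. The only differences are cosmetic (splitting the open sets into two families $V_{n,m},W_{n,m}$ instead of one, and spelling out the rotation justifying the reduction to $1\in K$).
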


\begin{proof}
Let $K$ be a Helson set on $\T$. We recall that
$$\mathcal{I}(K)=\{ g \in \A{},~ g|_K = 0 \}$$
which, endowed with the norm $\|\cdot\|_{A^1}$, is a Banach space. We define
$$\mathcal{G}(K) = \{ g \in \mathcal{I}(K),~ g \text{ is cyclic in } \A{p}, ~ \forall p > 1 \}.$$
We will show that $\mathcal{G}(K)$ is a dense $G_\delta$ subset of $\mathcal{I}(K)$.
For $\varepsilon > 0$ and $p > 1$, we consider the set
$$
G(\varepsilon,p) = \{ g \in \mathcal{I}(K),~ \exists P \in \PT,~ \exists Q \in \PTp,~ \|1-Pg\|_{A^p} < \varepsilon \text{ and }~ \|g-zQg\|_{A^p} < \varepsilon \}.
$$
Let $\varepsilon_n = \frac{1}{n}$ and $p_n=1+\frac{1}{n}$. To use a Baire Category argument, we will show that
$$\begin{array}{ll} (i) & \underset{n=1}{\overset{\infty}{\bigcap}} G(\varepsilon_n,p_n) = \mathcal{G}(K), \\ (ii) & \text{for $\varepsilon > 0$ and $p > 1$},~~ G(\varepsilon,p) \text{ is an open subset of } \mathcal{I}(K), \\ (iii) & \text{for $\varepsilon > 0$ and $p > 1$},~~ G(\varepsilon,p) \text{ is a dense subset of } \mathcal{I}(K). \end{array}$$

\bigskip \noindent
$(i)$: Let $g \in \mathcal{I}(K)$ such that for all $n \in \N$, there exist $P_n \in \PT$ and $Q_n \in \PTp$ satisfying $\|1-P_n g \|_{A^{p_n}} < \varepsilon_n$ and $\|g-zQ_n g \|_{A^{p_n}} < \varepsilon_n$. Since for all $p > 1$, there exists $N$ such that for all $n \geq N$ we have $p > p_n$,
$$\|1-P_n g \|_{A^p} \leq \|1-P_n g \|_{A^{p_n}} < \varepsilon_n \to  0,\qquad n\to \infty$$
and
$$\|g-zQ_n g \|_{A^p} \leq \|g-zQ_n g \|_{A^{p_n}} < \varepsilon_n \to  0,\qquad n\to \infty.$$
By Proposition \ref{base}, $g$ is cyclic in $A^p(\T)$ for all $p > 1$ and thus we have $g \in \mathcal{G}(K)$. The other inclusion is clear.

\bigskip \noindent
$(ii)$ follows from the fact that, for $f \in A(\T)$ and $g \in A^p(\T)$, $$\|fg\|_{A^p} \leq \|f\|_{A^1} \|g\|_{A^p}.$$

\bigskip \noindent
$(iii)$: Let $g_0 \in \mathcal{I}(K)$. We will show that for all $\eta > 0$, there exists $g \in G(\varepsilon,p)$ such that $\|g-g_0\|_{A^1} < \eta$. Without loss of generality, we suppose that $1 \in K$. So we have $g_0 \in \mathcal{I}(\{ 1 \})$. Since $K$ is a Helson set, we consider $\delta_2$ the constant satisfying the condition $(iii)$ of Definition \ref{Helson}. Since the set of cyclic vectors in $\mathcal{I}(\{ 1 \})$ is dense in $\mathcal{I}(\{ 1 \})$ (see \cite{ABA}), there exists a function $h$ cyclic in $\mathcal{I}(\{ 1 \})$ such that
$$\|h-g_0\|_{A^1} < \frac{\delta_2}{1+\delta_2} \eta.$$
Hence for all $t \in K$, 
$$|h(t)| = |h(t) - g_0(t)| \leq \|h-g_0\|_\infty \leq \|h-g_0\|_{A^1} < \frac{\delta_2}{1+\delta_2} \eta.$$
Since $h$ is cyclic in $\mathcal{I}(\{ 1 \})$, by Lemma \ref{cycliciteA0Ap}, $h$ is also cyclic in $\A{p}$. By Proposition \ref{base}, there exist $P \in \PT$ and $Q \in \PTp$ satisfying $\|1-Ph\|_{A^{p}} < \frac{\varepsilon}{2}$ and $\|h-zQh\|_{A^{p}} < \frac{\varepsilon}{2}$.
By Lemma \ref{lemme10}, there exists $f \in \A{}$ satisfying
$$\left\{ 
\begin{array}{rcl} f|_K &=&\displaystyle  h|_K ,\\ 
\|f\|_{A^1} &\leq& \displaystyle  \frac{ \|h|_K\|_\infty}{\delta_2} < \frac{\eta}{1+\delta_2} ,\\ 
\|f\|_{A^{p}} &<&\displaystyle  \min\Big( \frac{\varepsilon}{~~2\|P\|_{A^1}}, \frac{\varepsilon}{2\|1-zQ\|_{A^1}} \Big). \end{array}
 \right. $$
Let $g =h-f$. We have $g \in \mathcal{I}(K)$ since $f|_K = h|_K$. Moreover $g \in G(\varepsilon,p)$ since
$$\|1-Pg\|_{A^{p}} \leq \|1-Ph\|_{A^{p}} + \|P\|_{A^1} \|f\|_{A^{p}} < \varepsilon$$
and
$$\|g-zQg\|_{A^{p}} \leq  \|h-zQh\|_{A^{p}} + \|1-zQ\|_{A^1} \|f\|_{A^{p}} < \varepsilon.$$
Finally we have
$$\|g-g_0\|_{A^1} \leq  \|h - g_0\|_{A^1} + \|f\|_{A^1} < \frac{\delta_2}{1+\delta_2} \eta + \frac{\eta}{1+\delta_2} < \eta.$$
Hence $G(\varepsilon,p)$ is a dense subset of $\mathcal{I}(K)$.

\bigskip \noindent
By the Baire Category Theorem, $\underset{n=1}{\overset{\infty}{\bigcap}} G(\varepsilon_n,p_n)$ is a dense $G_\delta$ subset of $\mathcal{I}(K)$. In particular $\mathcal{G}(K)$ is not empty.
\end{proof}

\begin{proof} (of Theorem \ref{levmainresult})
We recall that $1<p<2$. Let $K$ be a Helson set given by Theorem \ref{LevHelson}. By Theorem \ref{Existence}, there exists $g \in A(\T)$ such that $g$ vanishes on $K$ and such that $g$ is cyclic in $A^p(\T)$. This implies in particular that $\log |g| \not \in L^1(\T)$. Moreover there exists $f \in C^1(\T)$ such that $\cZ(f)=\cZ(g)$ and $\log |f| \not \in L^1(\T)$ (take $f(\zeta)=e^{-1/d(\zeta,\cZ(g))}$ if $\zeta \not \in \cZ(g)$ and $0$ otherwise). By Proposition \ref{regCycl} and Theorem \ref{LevHelson}, $f$ is not bicyclic in $A^p(\T)$ so it is not cyclic in $A^p(\T)$.
\end{proof}

\begin{rem}
In the construction of $f$ and $g$ in Theorem \ref{levmainresult}, one is bicyclic and the other one is not.
A natural question arises: Is it possible to construct two bicyclic functions with the same properties as in Theorem \ref{levmainresult}?
\end{rem}

\section{Cyclicity in weighted $\ell^p$ spaces}

In the following, we denote $q=p/(p-1)$. In this section, we give both necessary conditions and sufficient conditions for $f$ to be cyclic in $A^p_\beta(\T)$, when $f$ is smooth, for $1<p<2$ and $0 \leq \beta q \leq 1$. When $\beta q > 1$, there is no cyclic vector in $A^p_\beta(\T)$. Indeed, for $1 \leq p < \infty$ and $\beta \geq 0$, $A^p_\beta(\T)$ is a Banach algebra of continuous functions if and only if $\beta q > 1$ (see \cite{ZAR}). So, if $f$ is cyclic in $A^p_\beta(\T)$, when $\beta q > 1$, then $\cZ(f)=\varnothing$ and $\log |f| \not \in L^1(\T)$ which is impossible since $f$ is continuous. In particular the conditions obtained in this section allow us to construct explicit examples of cyclic vectors. We also study the density of the set of cyclic vectors in weighted $\ell^p(\Z)$.

\subsection{Conditions for the cyclicity}

We denote by $H^\infty$ the space of functions $f \in L^\infty(\T)$ satisfying $\widehat{f}(n)=0$ for all $n < 0$. It is well known that $H^\infty$ can be identified with the space of bounded holomorphic functions on the disk $\mathbb{D} = \{ z \in \C,~ |z| < 1 \}$. Recall that outer functions are given by  
$$f(z)=\exp \int_\T\frac{\zeta+z}{\zeta-z}\log \varphi(\zeta) \frac{|d\zeta|}{2\pi}, \qquad z\in \mathbb{D},$$
where $\varphi$ is a positive function such that $\log \varphi\in L^1(\T)$. We denote by $f(\zeta)$ the radial limit of $f$ at $\zeta \in \T$ if it exists. We have $|f|=\varphi$  a.e. on $\T$. Moreover if $\varphi \in L^\infty(\T)$ then $f \in H^\infty$.

\bigskip

\begin{prop} \label{infimum}
Let $1<p<2$, $\beta \geq 0$ and $f \in A^2_\alpha(\T)$ with $\alpha > 1/p-1/2+\beta$. The following are equivalent.
\begin{enumerate}
\item $[f]_{\N_0}^{A^p_\beta(\T)}=[f]_\Z^{A^p_\beta(\T)}$,
\item $\inf \left\{ \|Pf\|_{A^p_\beta},~ P \in H^\infty,~ Pf \in A^2_\alpha(\T),~ P(0)=1 \right\}=0$.
\end{enumerate}
\end{prop}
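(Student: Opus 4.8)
The plan is to run the classical Szeg\"o-infimum argument. The only genuinely new point is that multiplication by an $L^\infty$-function is \emph{not} bounded on $A^p_\beta(\T)$, so the passage from analytic polynomials to $H^\infty$-symbols must be controlled by the auxiliary hypothesis $Pf\in A^2_\alpha(\T)$: since $\alpha-\beta>1/p-1/2$, Lemma~\ref{inclusionAp} gives the continuous embedding $A^2_\alpha(\T)\subset A^p_\beta(\T)$, and this embedding will play the role of the missing boundedness. I first observe that, multiplication by $z$ being a topological isomorphism of $A^p_\beta(\T)$, \eqref{equitop} makes (1) equivalent to $f\in[f]_{\N}^{A^p_\beta(\T)}$; writing a generic member of the span of $\{z^nf:n\ge1\}$ as $zQf$ with $Q\in\PTp$ and setting $P:=1-zQ$, this in turn says exactly that $\inf\{\|Pf\|_{A^p_\beta}:P\in\PTp,\ P(0)=1\}=0$. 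So the whole matter is to show that this \emph{polynomial} infimum vanishes if and only if the $H^\infty$-infimum in (2) does.

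The implication (1)$\Rightarrow$(2) is then immediate: an analytic polynomial $P$ with $P(0)=1$ lies in $H^\infty$ and satisfies $Pf\in A^2_\alpha$ (multiplication by a trigonometric polynomial is bounded on $A^2_\alpha$), so the family admissible in (2) contains the one above and its infimum is $0$ as well.

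For (2)$\Rightarrow$(1), fix $\varepsilon>0$ and pick $P\in H^\infty$ with $P(0)=1$, $g:=Pf\in A^2_\alpha$ and $\|Pf\|_{A^p_\beta}<\varepsilon$; I want to manufacture an analytic polynomial $\tilde P$ with $\tilde P(0)=1$ and $\|\tilde Pf\|_{A^p_\beta}\lesssim\varepsilon$. I would approximate $P$ by analytic polynomials that still take the value $1$ at the origin, in two moves. First replace $P$ by its Abel dilate $P_\rho(z):=P(\rho z)$, $0<\rho<1$: since $\widehat P\in\ell^2$ one has $\sum_{k\ge0}|\widehat P(k)|\rho^k(1+k)^\beta<\infty$, so $P_\rho\in A^1_\beta(\T)$ and its Taylor sections converge to it there; using $\|uv\|_{A^p_\beta}\le\|u\|_{A^1_\beta}\|v\|_{A^p_\beta}$ together with $f\in A^2_\alpha\subset A^p_\beta$, a sufficiently high Taylor section $\tilde P$ of $P_\rho$ then has $\tilde P(0)=1$ and $\|\tilde Pf-P_\rho f\|_{A^p_\beta}<\varepsilon$. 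Thus everything reduces to the norm convergence $\|P_\rho f-Pf\|_{A^p_\beta}\to 0$ as $\rho\to1^-$; granting it, $\|\tilde Pf\|_{A^p_\beta}\le\|\tilde Pf-P_\rho f\|+\|P_\rho f-Pf\|+\|Pf\|<3\varepsilon$ for a suitable choice of $\rho$ and $\tilde P$, and letting $\varepsilon\to0$ yields that the polynomial infimum is $0$, i.e. (1).

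The crux is therefore the convergence $\|P_\rho f-Pf\|_{A^p_\beta}\to 0$. I would split
\[
 P_\rho f-Pf=\bigl(P_\rho f-\mathcal{P}_\rho\ast g\bigr)+\bigl(\mathcal{P}_\rho\ast g-g\bigr),
\]
$\mathcal{P}_\rho$ being the Poisson kernel. The second term tends to $0$ in $A^p_\beta$ because $g\in A^2_\alpha\subset A^p_\beta$ and Abel means converge in norm in every space $A^r_\gamma(\T)$ (dominated convergence on the Fourier side). The first term has $n$-th Fourier coefficient $\sum_{k\ge0}(\rho^k-\rho^{|n|})\widehat P(k)\widehat f(n-k)$, and here one must use \emph{both} $\widehat P\in\ell^2$ (which makes $\|P_\rho\|_{A^t_\beta}$ blow up only polynomially in $(1-\rho)^{-1}$) \emph{and} $g=Pf\in A^2_\alpha$ (which controls the convolution $\widehat P\ast\widehat f$ beyond what $g\in A^p_\beta$ alone would give): split the frequencies at $|n|\sim(1-\rho)^{-1}$, use $|\rho^k-\rho^{|n|}|\le\min\!\bigl(1,\ \bigl|k-|n|\bigr|\,(1-\rho)\bigr)$, and apply a generalized product estimate $\|uv\|_{A^p_\beta}\lesssim\|u\|_{A^t_\beta}\|v\|_{A^s_\beta}$ with $\tfrac1t+\tfrac1s=1+\tfrac1p$, balancing the exponents so that the decay coming from $f\in A^2_\alpha$ — whose sharp power of $1-\rho$ is governed precisely by $\alpha-\beta>1/p-1/2$ — outweighs the blow-up of $P_\rho$. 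Making this balance close, and checking that the hypothesis on $\alpha$ is exactly what it requires, is the step I expect to be the main obstacle; the rest is bookkeeping with Lemma~\ref{inclusionAp}, the product inequalities, and \eqref{equitop}.
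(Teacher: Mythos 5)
Your reduction of (1) to the vanishing of the polynomial infimum via \eqref{equitop}, and the implication (1)$\Rightarrow$(2), are correct and match the paper. The gap is in (2)$\Rightarrow$(1), and you have located it yourself: the norm convergence $\|P_\rho f-Pf\|_{A^p_\beta}\to 0$ is precisely the step you do not prove, and your proposed route to it (frequency splitting at $|n|\sim(1-\rho)^{-1}$ plus a Young-type product estimate) is left as a balance you ``expect to be the main obstacle.'' Note that the hypothesis available to you is only that $Pf\in A^2_\alpha(\T)$ for this particular $f$ --- $P$ is not assumed to be a multiplier of $A^2_\alpha(\T)$ --- so even the uniform bound $\sup_{\rho}\|P_\rho f\|_{A^2_\alpha}<\infty$ is a nontrivial fact (the paper imports it from the computations of \cite[Proposition 3.4]{RRS} and \cite{EKR}), and norm convergence of the dilates under this weak hypothesis is not something you can wave through with bookkeeping.

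The paper's proof shows that norm convergence is not needed, and this is the idea your argument is missing. Writing $P=1-zQ$ with $Q\in H^\infty$, one only has to show $zQf\in[f]_{\N}^{A^p_\beta(\T)}$. Since $[f]_{\N}^{A^p_\beta(\T)}$ is a closed subspace, it is weakly closed by Hahn--Banach; the dilates satisfy $zQ_rf\in[f]_{\N}^{A^p_\beta(\T)}$ (exactly as in your Taylor-section step, since $Q_r\in A^1_\beta(\T)$); and it therefore suffices to prove $zQ_rf\to zQf$ \emph{weakly} in $A^p_\beta(\T)$. This follows from the uniform bound $\sup_r\|zQ_rf-zQf\|_{A^2_\alpha}<\infty$ together with testing against trigonometric polynomials, which are dense in the dual $A^q_{-\beta}(\T)$, where the pairing converges by dominated convergence because $f\in L^1(\T)$ and $Q_r\to Q$ boundedly pointwise. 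If you replace your norm-convergence target by this weak-convergence argument (and quote or reprove the uniform $A^2_\alpha$ bound on the dilates), your proof closes; as written, it does not.
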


Note that, by Lemma \ref{inclusionAp}, $\alpha > 1/p-1/2+\beta$ implies that $A^2_\alpha(\T) \subset A^p_\beta(\T)$.

\begin{proof}
We suppose first $[f]_{\N_0}^{A^p_\beta(\T)}=[f]_\Z^{A^p_\beta(\T)}$. By Proposition \ref{base}, there exists $P_n \in \PTp$ such that $\lim \|f-zP_nf\|_{A^p_\beta} = 0$ as $n \to \infty$. Since $1-zP_n \in H^\infty$, we have $$\inf \left\{ \|Pf\|_{A^p_\beta},~ P \in H^\infty,~ Pf \in A^2_\alpha(\T),~ P(0)=1 \right\}=0.$$
Now suppose that \textit{(2)} holds. Let $P_n \in H^\infty$, for $n \in \N$, such that $P_n(0)=1$, $P_nf \in A^2_\alpha(\T)$ and $\lim \|P_nf\|_{A^p_\beta} = 0$ as $n \to \infty$. We write $P_n=1-zQ_n$ with $Q_n \in H^\infty$. So we have, $\lim \|zQ_nf - f \|_{A^p_\beta} = 0$. Recall that $[f]_{\N_0}^{A^p_\beta(\T)}=[f]_\Z^{A^p_\beta(\T)}$ holds if and only if $f \in [f]_{\N}^{A^p_\beta(\T)}$. Thus it suffices to show that $zQ_nf \in [f]_{\N}^{A^p_\beta(\T)}$ since $[f]_{\N}^{A^p_\beta(\T)}$ is a closed subset of $A^p_\beta(\T)$. Note that $zQ_nf \in A^2_\alpha(\T)$ since $P_nf$ and $f$ are in $A^2_\alpha(\T)$. So we will show the following claim: if $Q \in H^\infty$ verifies $zQf \in A^2_\alpha(\T)$ then $zQf \in [f]_{\N}^{A^p_\beta(\T)}$.

We note, for $0<r<1$, $Q_r(z)=Q(rz)$. We have $zQ_rf \in [f]_{\N}^{A^p_\beta(\T)}$ since for all $\varepsilon > 0$, there exists $\tilde{Q_r} \in \PTp$ such that $\|Q_r - \tilde{Q_r}\|_{A^1_\beta}  < \varepsilon$ and $\| zQ_rf - z\tilde{Q_r}f \|_{A^p_\beta} \leq \|Q_r - \tilde{Q_r}\|_{A^1_\beta}\|zf\|_{A^p_\beta} < \varepsilon \|zf\|_{A^p_\beta} $. Moreover, by the Hahn-Banach theorem, $[f]_{\N}^{A^p_\beta(\T)}$ is weakly closed in $A^p_\beta(\T)$. So it suffices to show that $zQ_rf \rightarrow zQf$ weakly in $A^p_\beta(\T)$ as $r \to 1$. Let $g \in A^q_{-\beta}(\T)$ with $1/p+1/q=1$. There exists a sequence $(g_n)$ in $\PT$ such that $\lim \| g-g_n \|_{A^q_{-\beta}} = 0$, as $n \to \infty$. So we have
\begin{eqnarray*}
|\langle zQ_rf - zQf, g \rangle| &\leq& |\langle zQ_rf - zQf, g-g_n \rangle| + |\langle zQ_rf - zQf, g_n \rangle|\\
& \leq & \| zQ_rf - zQf \|_{A^p_\beta} \| g-g_n \|_{A^q_{-\beta}} +\\ & &~~~ \left| \int_\T (zQ_rf - zQf)(\zeta) g_n(\zeta) | d \zeta | \right|.
\end{eqnarray*}
Since $\| zQ_rf - zQf \|_{A^p_\beta} \leq \| zQ_rf - zQf \|_{A^2_\alpha}$, we can show, with the same calculus that in \cite[proposition 3.4]{RRS} and \cite{EKR}, that $\| zQ_rf - zQf \|_{A^p_\beta}$ remains bounded as $r \to 1$. Moreover $\| g-g_n \|_{A^q_{-\beta}} \rightarrow 0$ and, by the dominated convergence theorem, $\int_\T | (zQ_rf - zQf)(\zeta) g_n(\zeta) | | d \zeta | \rightarrow 0$ as $r \to 1$ since $f \in L^1(\T)$. So $zQ_rf \rightarrow zQf$ weakly in $A^p_\beta(\T)$ as $r \to 1$ and $zQf \in [f]_{\N}^{A^p_\beta(\T)}$. This completes the proof.
\end{proof}

We also need the following lemma, see the proof of Theorem 1.3 in \cite{KEL}.

\begin{lemme} \label{lemmeKEL}
Let $E$ be a closed subset of $\T$ with Lebesgue measure zero and such that $\log (d(\cdot,E)) \not \in L^1(\T)$.
Let $\delta' > 1/2$ and $\gamma >0$ such that $2\delta'-\gamma-1 \geq 0$. Let $F_\varepsilon$ be the outer function $F_\varepsilon$ satisfying
$$|F_\varepsilon(\zeta)| = \left( d(\zeta,E)^\gamma + \varepsilon \right)^{1/2} \qquad a.e. \text{ on } \T$$
and
$$M_\varepsilon = \frac{1}{2} \int_\T \log  \left( \frac{1}{d(\zeta,E)^\gamma+\varepsilon} \right) d\zeta.$$
For all $\varepsilon > 0$ we have
$$\iint_{\T^2}  d(\zeta',E)^{2(\delta'-\gamma)} \frac{ |F_\varepsilon(\zeta)-F_\varepsilon (\zeta')|^2}{|\zeta-\zeta'|^2} | d\zeta |  | d \zeta' | \lesssim M_\varepsilon.$$
\end{lemme}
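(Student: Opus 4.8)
The plan is to reduce the double integral to a one-dimensional estimate by the standard device of splitting the domain $\T^2$ according to whether the two points $\zeta,\zeta'$ are "close" to each other or not, and then bounding each piece. First I would recall that $F_\varepsilon$ is the outer function with modulus $(d(\zeta,E)^\gamma+\varepsilon)^{1/2}$, so that $\log|F_\varepsilon|$ is bounded above (by $\log(2^\gamma+\varepsilon)^{1/2}$, say, since $d(\cdot,E)\le \mathrm{diam}(\T)$) and its negative part integrates to $M_\varepsilon$; thus $\|\log|F_\varepsilon|\|_{L^1(\T)}\asymp M_\varepsilon$ up to an additive constant, and for $\varepsilon$ small this constant is absorbed since $M_\varepsilon\to+\infty$ (this is exactly where the hypothesis $\log d(\cdot,E)\notin L^1(\T)$ enters). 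The key analytic input is a pointwise/$L^2$ control of the oscillation of an outer function in terms of the oscillation of its boundary modulus: for an outer function $F$ one has a bound of the shape
$$|F(\zeta)-F(\zeta')| \lesssim \bigl(|F(\zeta)|+|F(\zeta')|\bigr)\cdot\bigl(\text{something measuring the variation of }\log|F|\text{ between }\zeta,\zeta'\bigr),$$
together with the elementary estimate $\bigl||F(\zeta)|-|F(\zeta')|\bigr| \le |\,|F_\varepsilon|(\zeta)-|F_\varepsilon|(\zeta')\,| \lesssim d(\zeta,E)^{\gamma/2}+d(\zeta',E)^{\gamma/2}$ coming from Hölder continuity of $t\mapsto t^{\gamma/2}$ when $\gamma\le 2$ (and a Lipschitz bound when $\gamma\ge 2$), so that one can trade $|F_\varepsilon(\zeta)-F_\varepsilon(\zeta')|^2$ against an expression involving $d(\zeta,E)$, $d(\zeta',E)$ and $|\zeta-\zeta'|$.

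The main step is then the near-diagonal region. On $\{|\zeta-\zeta'|\le d(\zeta',E)/2\}$ one has $d(\zeta,E)\asymp d(\zeta',E)$, so the weight $d(\zeta',E)^{2(\delta'-\gamma)}$ and the factor $d(\zeta,E)^{\gamma}$ (arising from $|F_\varepsilon(\zeta)-F_\varepsilon(\zeta')|^2$ when $F_\varepsilon$ is roughly Lipschitz of the modulus, i.e. using $|F_\varepsilon(\zeta)-F_\varepsilon(\zeta')|\lesssim |\zeta-\zeta'|\cdot \sup$ of the relevant derivative-type quantity $\sim |\zeta-\zeta'|\, d(\zeta',E)^{\gamma/2-1}$ when $\gamma\ge 2$, or the Hölder version when $\gamma<2$) combine: integrating $|\zeta-\zeta'|^{-2}|F_\varepsilon(\zeta)-F_\varepsilon(\zeta')|^2$ in $\zeta$ over a $d(\zeta',E)$-ball produces a factor $d(\zeta',E)^{\gamma-1}$ (or, in the Hölder regime $\gamma<2$, a factor $\int_{|h|\le d(\zeta',E)} |h|^{\gamma-2}\,d|h|\asymp d(\zeta',E)^{\gamma-1}$ provided $\gamma>1$, with the borderline handled by $2\delta'-\gamma-1\ge 0$), leaving $\int_\T d(\zeta',E)^{2\delta'-\gamma-1}\,d|\zeta'|$, which is finite and bounded by a constant because the exponent is $\ge 0$; this constant is then $\lesssim M_\varepsilon$ for small $\varepsilon$. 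On the far region $\{|\zeta-\zeta'|> d(\zeta',E)/2\}$ one uses the crude bound $|F_\varepsilon(\zeta)-F_\varepsilon(\zeta')|^2\lesssim |F_\varepsilon(\zeta)|^2+|F_\varepsilon(\zeta')|^2 \lesssim d(\zeta,E)^\gamma+d(\zeta',E)^\gamma+\varepsilon$, and integrates $|\zeta-\zeta'|^{-2}$ from $d(\zeta',E)/2$ outward; the divergence of this integral at the lower endpoint is precisely $\asymp d(\zeta',E)^{-1}$, and again the weights combine to give an integrable power of $d(\zeta',E)$, while the $\varepsilon$-term contributes $\varepsilon\int\!\int |\zeta-\zeta'|^{-2}\cdots$ which after the cutoff is $\lesssim \varepsilon\int_\T d(\zeta',E)^{2\delta'-\gamma-1}\,d|\zeta'|\lesssim \varepsilon \lesssim M_\varepsilon$.

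I expect the genuine obstacle to be the first ingredient: getting a clean quantitative estimate for $|F_\varepsilon(\zeta)-F_\varepsilon(\zeta')|$ that is good enough near the boundary, where $F_\varepsilon$ is not smooth and its modulus degenerates on $E$. This is where one must invoke the precise computation from the proof of Theorem 1.3 in \cite{KEL} — estimating the Poisson/conjugate-Poisson integral defining $\arg F_\varepsilon$ and the radial behaviour of $|F_\varepsilon|$ — rather than reproving it; the rest is a careful but routine bookkeeping of exponents, in which the three hypotheses $\delta'>1/2$, $\gamma>0$ and $2\delta'-\gamma-1\ge 0$ are used exactly to make the one-dimensional integrals in $\zeta'$ converge and to make the appearing constants harmless compared with $M_\varepsilon$. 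I would organise the write-up as: (1) reduction to the two regions; (2) the near-diagonal estimate via Hölder/Lipschitz control of $|F_\varepsilon|$ and the oscillation bound for the argument; (3) the far estimate via the trivial modulus bound; (4) collecting constants and comparing with $M_\varepsilon\to\infty$.
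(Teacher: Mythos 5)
There is a genuine gap, and it sits exactly where you locate ``the genuine obstacle.'' Your plan reduces everything to a quantitative bound on $|F_\varepsilon(\zeta)-F_\varepsilon(\zeta')|$ for the \emph{outer} function $F_\varepsilon$ (modulus \emph{and} argument), and then says this must be ``invoked from the proof of Theorem 1.3 in \cite{KEL} rather than reproved.'' But the lemma under discussion \emph{is} the estimate extracted from that proof (the paper itself only cites it, it does not reprove it), so deferring the oscillation bound to \cite{KEL} makes your argument circular at the one non-routine point: nothing in your write-up controls the argument of $F_\varepsilon$, which degenerates precisely where $d(\cdot,E)^\gamma+\varepsilon$ is small. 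The missing idea that makes the lemma provable without ever touching the argument is the Richter--Sundberg local Dirichlet integral: since $\int_\T |F(\zeta)-F(\lambda)|^2|\zeta-\lambda|^{-2}\,|d\zeta|$ equals (up to $2\pi$) the local Dirichlet integral $D_\lambda(F)$, the left-hand side of the lemma is $\asymp\int_\T d(\lambda,E)^{2(\delta'-\gamma)}D_\lambda(F_\varepsilon)\,|d\lambda|$, and for an outer function $D_\lambda(F_\varepsilon)$ has a closed formula involving only $\psi=|F_\varepsilon|^2=d(\cdot,E)^\gamma+\varepsilon$, namely the integral of $\bigl(\psi(\zeta)-\psi(\lambda)-\psi(\lambda)\log(\psi(\zeta)/\psi(\lambda))\bigr)|\zeta-\lambda|^{-2}$. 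This also shows that $M_\varepsilon$ is not slack to be ``absorbed because $M_\varepsilon\to\infty$'': it is produced by the logarithmic term $\psi(\lambda)\log\bigl(1/\psi(\zeta)\bigr)$, whose integral is comparable to $M_\varepsilon$; and note that for large $\varepsilon$ one has $M_\varepsilon<0$, so ``a constant is $\lesssim M_\varepsilon$'' is not even formally available under the stated quantifier ``for all $\varepsilon>0$.''

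A second, concrete failure is in your far region $\{|\zeta-\zeta'|>d(\zeta',E)/2\}$. The crude bound $|F_\varepsilon(\zeta)-F_\varepsilon(\zeta')|^2\lesssim d(\zeta,E)^\gamma+d(\zeta',E)^\gamma+\varepsilon$ followed by $\int_{|\zeta-\zeta'|>d(\zeta',E)/2}|\zeta-\zeta'|^{-2}|d\zeta|\asymp d(\zeta',E)^{-1}$ leaves, for the $\varepsilon$-term, $\varepsilon\int_\T d(\zeta',E)^{2\delta'-2\gamma-1}\,|d\zeta'|$. The hypotheses only give $2\delta'-\gamma-1\ge 0$, i.e.\ $\delta'\ge(\gamma+1)/2$, which for $\gamma\ge 1$ does not force $\delta'>\gamma$; when $\delta'\le\gamma$ the exponent is $\le -1$ and this integral is $+\infty$ for every nonempty $E$, so multiplying by $\varepsilon$ does not save it. (A similar problem affects the $d(\zeta',E)^\gamma$-term.) So the far region cannot be handled by discarding the cancellation in $F_\varepsilon(\zeta)-F_\varepsilon(\zeta')$; one must keep the difference structure there as well, which again points back to the outer-function formula above. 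Until the oscillation estimate for $F_\varepsilon$ is actually supplied and the far-region exponents repaired, the proposal is a plan rather than a proof.
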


The following result was obtained in \cite{KEL} for $p=2$ and $\beta=1/2$.

\begin{theo}
Let $1 < p < 2$ and $\beta > 0$ such that $\beta q < 1$. Let $f \in \text{Lip}_\delta(\T)$, where $\delta > \beta + 1/p - 1/2$. We suppose that $\cZ(f)$ has Lebesgue measure zero.  If 
$$\int_\T \log d(\zeta,\cZ(f)) |d\zeta|=-\infty$$
 then 
 $$[f]_\Z^{A^p_\beta(\T)}=[f]_{\N_0}^{A^p_\beta(\T)}.$$
  Furthermore,  if $f \in A^1_\beta(\T)$ and $\dim(\cZ(f)) < \frac{2}{q}(1 - \beta q)$ then $f$ is cyclic in $A^p_\beta(\T)$.
\end{theo}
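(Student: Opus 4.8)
The plan is to establish the two assertions in turn, using Proposition~\ref{infimum} for the first and a combination of Proposition~\ref{infimum} with Proposition~\ref{base} for the second. For the first assertion, I would apply Proposition~\ref{infimum} with $\alpha$ chosen so that $\beta + 1/p - 1/2 < \alpha < \delta$, which is possible precisely because $f \in \Lip_\delta(\T)$ forces $f \in A^2_\alpha(\T)$ for every such $\alpha$ (a standard estimate: $\Lip_\delta(\T) \subset A^2_\alpha(\T)$ whenever $\alpha < \delta$, coming from the decay of Fourier coefficients of Hölder functions). It then suffices to produce, for every $\varepsilon > 0$, an $H^\infty$ function $P$ with $P(0) = 1$, $Pf \in A^2_\alpha(\T)$, and $\|Pf\|_{A^p_\beta} < \varepsilon$. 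The natural candidate is built from the outer functions $F_\varepsilon$ of Lemma~\ref{lemmeKEL}: normalise $G_\varepsilon = e^{-M_\varepsilon} \overline{F_\varepsilon(0)}^{-1} F_\varepsilon$ or rather take $P_\varepsilon = c_\varepsilon F_\varepsilon / F_\varepsilon(0)$ so that $P_\varepsilon(0) = 1$, with the modulus of $P_\varepsilon$ comparable to $(d(\zeta, E)^\gamma + \varepsilon)^{1/2}$ up to the normalising constant $e^{M_\varepsilon}$, where $E = \cZ(f)$.

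**Key steps.** First, fix $\delta' = \alpha$ (note $\alpha > 1/2$) and choose $\gamma > 0$ with $2\alpha - \gamma - 1 \geq 0$ and also $\gamma$ small enough that $f$ still vanishes to the right order near $E$ — concretely, since $f \in \Lip_\delta$ vanishing on $E$ gives $|f(\zeta)| \lesssim d(\zeta, E)^\delta$, I want $\gamma \leq 2\delta$ roughly so that $d(\cdot, E)^{-\gamma}|f|^2$ is controlled. Second, from $|P_\varepsilon F_\varepsilon^{-1}|$ comparable to $e^{M_\varepsilon}$ and $|f(\zeta)| \lesssim d(\zeta,E)^\delta$, estimate $\|P_\varepsilon f\|_{A^2_\alpha}^2 = \|P_\varepsilon f\|_{L^2}^2 + (\text{fractional part})$, where the fractional $A^2_\alpha$ seminorm is, up to constants, the double integral $\iint |(P_\varepsilon f)(\zeta) - (P_\varepsilon f)(\zeta')|^2 |\zeta - \zeta'|^{-1-2\alpha}\,|d\zeta||d\zeta'|$; split $P_\varepsilon f$ as (smooth bounded part)$\times F_\varepsilon$ and distribute the difference, bounding the $F_\varepsilon$-difference term by Lemma~\ref{lemmeKEL} (after absorbing the weight $d(\zeta', E)^{2(\alpha - \gamma)} \gtrsim |f(\zeta')/F_\varepsilon(\zeta')|^2$-type factor) and the $f$-difference term directly from the Hölder bound. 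The upshot is $\|P_\varepsilon f\|_{A^2_\alpha}^2 \lesssim M_\varepsilon e^{2M_\varepsilon} \cdot e^{-2M_\varepsilon}$-type cancellation — more carefully, $\|P_\varepsilon f\|_{A^2_\alpha} \lesssim M_\varepsilon^{1/2} e^{M_\varepsilon} \cdot e^{-M_\varepsilon} = M_\varepsilon^{1/2} \cdot(\text{bounded})$, which does NOT go to zero, so the genuine point is that the normalisation makes $\|P_\varepsilon\|$ small: since $\log(d(\cdot,E)) \notin L^1$, we have $M_\varepsilon \to \infty$ as $\varepsilon \to 0$, hence $e^{M_\varepsilon}\to\infty$, and $F_\varepsilon(0)$ has modulus $e^{-M_\varepsilon}\cdot(\text{something}\le 1)$, so $|P_\varepsilon| = |F_\varepsilon|/|F_\varepsilon(0)| \asymp e^{M_\varepsilon}(d(\cdot,E)^\gamma+\varepsilon)^{1/2}$; then $\|P_\varepsilon f\|_{A^p_\beta} \lesssim \|P_\varepsilon f\|_{A^2_\alpha}$ by Lemma~\ref{inclusionAp}, and the careful bookkeeping shows this quantity is $\lesssim (M_\varepsilon/e^{2M_\varepsilon})^{1/2}\to 0$ thanks to the $d(\cdot,E)^\delta$ vanishing of $f$ beating the $e^{M_\varepsilon}$ blow-up when $\gamma < 2\delta$. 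Applying Proposition~\ref{infimum} then yields $[f]_\Z^{A^p_\beta} = [f]_{\N_0}^{A^p_\beta}$.

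**The furthermore clause.** For the second assertion, I would invoke Proposition~\ref{base}: it remains to show that $1 \in [f]_\Z^{A^p_\beta}$, i.e. $f$ is bicyclic, since the first part already gives $f \in [f]_\N^{A^p_\beta}$. Bicyclicity of $f$ in $A^p_\beta(\T)$ is where the dimension hypothesis $\dim(\cZ(f)) < \frac{2}{q}(1 - \beta q)$ enters: a set of sufficiently small Hausdorff dimension supports no nonzero element of the relevant dual/multiplier space, so — by the standard spectral synthesis / Beurling-type argument for $A^p_\beta$ (the condition $\dim E < 2(p-1)/p - 2\beta \cdot\frac{p-1}{p}\cdot\ldots$ is exactly the threshold guaranteeing $A^q_{-\beta}(\T)$ contains no nonzero distribution supported on $E$, via a capacity/energy estimate) — the only closed translation-invariant subspace of $A^p_\beta(\T)$ containing $f$ and contained in the ideal of functions vanishing on $\cZ(f)$ is the whole space, provided $f$ itself is "well-behaved" on $\cZ(f)$. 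Combined with $f \in A^1_\beta(\T)$ (which controls the smoothness needed to run synthesis on $\cZ(f)$) and the already-established $[f]_\Z = [f]_{\N_0}$, Proposition~\ref{base} gives cyclicity.

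**Main obstacle.** The hard part will be the quantitative estimate in the first assertion: getting $\|P_\varepsilon f\|_{A^2_\alpha} \to 0$ requires carefully matching the vanishing rate $d(\cdot, \cZ(f))^\delta$ of $f$ against the parameters $\gamma, \alpha$ and tracking how the normalisation constant $e^{M_\varepsilon}$ (which blows up, giving the divergence-of-the-log its teeth) interacts with the double-integral bound from Lemma~\ref{lemmeKEL} — in particular verifying that the weight $d(\zeta',\cZ(f))^{2(\alpha-\gamma)}$ appearing there is dominated by $|f(\zeta')|^2/|F_\varepsilon(\zeta')|^2$ up to the right powers, which is what forces the constraint $\delta > \beta + 1/p - 1/2$ to be essentially sharp. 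The second obstacle is justifying the synthesis/no-nonzero-annihilator step for $A^p_\beta$ under the Hausdorff dimension bound, for which I would cite or adapt the capacitary argument behind Proposition~\ref{regCycl} together with Lemma~\ref{inclusionAp}.
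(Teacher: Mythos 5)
Your overall architecture (reduce to Proposition~\ref{infimum}, build an outer test function from the distance to $E=\cZ(f)$, control the $A^2_\alpha$ norm via the Douglas-type double integral and Lemma~\ref{lemmeKEL}, then get the ``furthermore'' clause from bicyclicity under the dimension hypothesis) matches the paper. But the central construction is inverted, and this is a genuine gap, not a bookkeeping issue. You take $P_\varepsilon = F_\varepsilon/F_\varepsilon(0)$ with $|F_\varepsilon| = (d(\cdot,E)^\gamma+\varepsilon)^{1/2}$. Since the modulus of an outer function at $0$ is the geometric mean of its boundary modulus, $|F_\varepsilon(0)| = e^{-M_\varepsilon}$, so $|P_\varepsilon| \asymp e^{M_\varepsilon}(d(\cdot,E)^\gamma+\varepsilon)^{1/2}$, which blows up pointwise a.e.\ as $\varepsilon \to 0$ (precisely because $M_\varepsilon \to \infty$, which is the hypothesis $\log d(\cdot,E) \notin L^1$). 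Hence $\|P_\varepsilon f\|_{L^2}^2 \gtrsim e^{2M_\varepsilon}\int_\T d(\zeta,E)^\gamma |f(\zeta)|^2\,|d\zeta| \to \infty$, and no amount of careful bookkeeping in the seminorm term can rescue this; you in fact notice mid-argument that your quantity ``does NOT go to zero'' and then resolve the tension by asserting the normalisation makes $|P_\varepsilon|$ small, which is the opposite of what your normalisation does. The correct test function is the reciprocal: the outer function $p_\varepsilon$ with $|p_\varepsilon| = e^{-M_\varepsilon}(d(\cdot,E)^\gamma+\varepsilon)^{-1/2}$ (so again $p_\varepsilon(0)=1$, but now $|p_\varepsilon|$ is uniformly small away from $E$ because the geometric mean of $(d^\gamma+\varepsilon)^{-1/2}$ diverges); the divergence of $\int\log d(\cdot,E)$ is exploited exactly through $e^{-M_\varepsilon}\to 0$, while near $E$ the growth $(d^\gamma+\varepsilon)^{-1/2}$ is beaten by $|f|\lesssim d^\delta$ provided $\gamma<2\delta$. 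In the paper $F_\varepsilon$ enters only as an auxiliary object, to estimate the differences $|1/p_\varepsilon(\zeta)-1/p_\varepsilon(\zeta')|$ via Lemma~\ref{lemmeKEL}; it is not itself the test function.

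Two secondary points. First, your choice $\delta'=\alpha$ in Lemma~\ref{lemmeKEL} is not legitimate: $\alpha$ only needs to exceed $\beta+1/p-1/2$, which can be well below $1/2$, so the hypothesis $\delta'>1/2$ may fail; the paper instead takes $\alpha<\min(\delta,1/2)$ and applies the lemma with $\delta'=\delta-\alpha+1/2>1/2$, with $\gamma<\delta-\alpha$ ensuring $2\delta'-\gamma-1\geq 0$. Second, your treatment of the ``furthermore'' clause is directionally right but over-engineered: the paper simply cites the bicyclicity criterion of \cite[Theorem A]{LMF} under $\dim(\cZ(f))<\tfrac{2}{q}(1-\beta q)$ and combines it with the first assertion; no new capacitary argument is needed there.
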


\begin{proof}
Let $\alpha > 0$ such that $\beta + 1/p - 1/2 < \alpha < \min(\delta,1/2)$. First we have $f \in \text{Lip}_\delta(\T) \subset A^2_\alpha(\T) \subset  A^p_\beta(\T)$.
Indeed by Douglas' formula (see \cite[section 5]{RRS}), $f \in A^2_\alpha(\T)$ if and only if $f \in L^2(\T)$ and 
$$\D_\alpha(f) = \iint_{\T^2} \frac{|f(\zeta)-f(\zeta')|^2}{|\zeta-\zeta'|^{1+2\alpha}} |d\zeta||d \zeta'| < \infty.$$
Moreover we have
\begin{equation} \label{Douglasform}
\|f\|_{A^2_\alpha}^2 \asymp \|f\|_{L^2(\T)}^2 + \D_\alpha(f).
\end{equation}
Thus if $f \in \text{Lip}_\delta(\T)$ then 
$$\D_\alpha(f) \lesssim  \iint_{\T^2} \frac{1}{|\zeta-\zeta'|^{1+2\alpha-2\delta}} |d\zeta||d \zeta'| < \infty$$
and we get $f \in A^2_\alpha(\T)$. On the other hand, by Proposition \ref{inclusionAp}, we have $A^2_\alpha(\T) \subset  A^p_\beta(\T)$.

By Proposition \ref{infimum}, it suffices to show that 
$$\inf \left\{ \|Pf\|_{A^p_\beta},~ P \in H^\infty,~ Pf \in A^2_\alpha(\T),~ P(0)=1 \right\}=0$$
to complete the proof.
 Let $\varepsilon > 0$ and $\gamma>0$ such that $\gamma < \delta - \alpha$. We define $p_\varepsilon$ to be the outer function satisfying
$$|p_\varepsilon(\zeta)| = \frac{e^{-M_\varepsilon}}{(d(\zeta,E)^\gamma+\varepsilon)^{1/2}} \qquad a.e.$$
where $E=\cZ(f)$ and 
$$M_\varepsilon = \frac{1}{2} \int_\T \log  \left( \frac{1}{d(\zeta,E)^\gamma+\varepsilon} \right) |d\zeta|$$
so that $p_\varepsilon(0)=1$.  Since $\log d(\cdot,E)\notin L^1(\T)$,  $M_\varepsilon \to \infty$ as $\varepsilon \to 0$.

\bigskip

We will show that
$$\|p_\varepsilon f \|_{A^p_\beta} \leq \|p_\varepsilon f \|_{A^2_\alpha} \longrightarrow 0 ~~~~ \text{ as } \varepsilon \to 0.$$
By \eqref{Douglasform}, we have 
$$\|p_\varepsilon f \|_{A^2_\alpha}^2 \asymp \|p_\varepsilon f\|_{L^2(\T)}^2 + \D_\alpha(p_\varepsilon f).$$
First, since $\gamma < 2\delta$,
$$\|p_\varepsilon f \|_{L^2(\T)}^2 \lesssim e^{-2M_\varepsilon} \int_\T \frac{d(\zeta,E)^{2\delta}}{d(\zeta,E)^{\gamma}} |d \zeta| \lesssim e^{-2M_\varepsilon} \to 0.$$
Then we have
$$\D_\alpha(p_\varepsilon f) = \iint_{\T^2} \frac{|p_\varepsilon f(\zeta)-p_\varepsilon f(\zeta')|^2}{|\zeta-\zeta'|^{1+2\alpha}} |d\zeta| |d \zeta'|.$$
Note that for all $(\zeta,\zeta') \in \T^2$ we have
$$|p_\varepsilon f(\zeta)-p_\varepsilon f(\zeta')|^2 \leq 2 |p_\varepsilon (\zeta)|^2 |f(\zeta)- f(\zeta')|^2 + 2 |f(\zeta')|^2|p_\varepsilon(\zeta)-p_\varepsilon (\zeta')|^2.$$
So if we denote $\Gamma = \{ (\zeta,\zeta') \in \T^2,~ |d(\zeta',E)| \leq |d(\zeta,E)| \}$, we obtain
\begin{eqnarray*}
\D_\alpha(p_\varepsilon f) & \lesssim & \iint_\Gamma |p_\varepsilon (\zeta)|^2 \frac{ |f(\zeta)- f(\zeta')|^2}{|\zeta-\zeta'|^{1+2\alpha}} |d\zeta| |d \zeta'| + \iint_\Gamma |f(\zeta')|^2 \frac{ |p_\varepsilon(\zeta)-p_\varepsilon (\zeta')|^2}{|\zeta-\zeta'|^{1+2\alpha}} |d\zeta| |d \zeta'|\\
& = & A_\varepsilon + B_\varepsilon.
\end{eqnarray*}
Let $\eta$ such that $\frac{\gamma}{2\delta} \leq \eta < \frac{2\delta-2\alpha}{2\delta}$. Then, using the fact that for $(\zeta,\zeta') \in \Gamma$, $|f(\zeta')| \leq d(\zeta,E)^\delta$, we get
\begin{eqnarray*}
A_\varepsilon & = & e^{-2M_\varepsilon} \iint_\Gamma \left( \frac{ |f(\zeta)- f(\zeta')|^{2\eta}}{d(\zeta,E)^\gamma+\varepsilon} \right) \left( \frac{ |f(\zeta)- f(\zeta')|^{2-2\eta}}{|\zeta-\zeta'|^{1+2\alpha}} \right) |d \zeta| |d \zeta'|\\
& \lesssim & e^{-2M_\varepsilon} \iint_{\T^2} \frac{ d(\zeta,E)^{2\eta \delta}}{d(\zeta,E)^\gamma} \left( \frac{ |f(\zeta)- f(\zeta')|^{2-2\eta}}{|\zeta-\zeta'|^{1+2\alpha}} \right) |d \zeta| |d \zeta'|\\
& \lesssim & e^{-2M_\varepsilon} \iint_{\T^2} \frac{ |\zeta- \zeta'|^{(2-2\eta)\delta}}{|\zeta-\zeta'|^{1+2\alpha}} |d \zeta| |d \zeta'| \\
&\lesssim & e^{-2M_\varepsilon} \to 0.
\end{eqnarray*}
To estimate $B_\varepsilon$, we consider the outer function $F_\varepsilon$ given in Lemma \ref{lemmeKEL}. Let
$$\Gamma_1 = \{ (\zeta,\zeta') \in \Gamma,~ d(\zeta',E) \leq |\zeta-\zeta'|\},$$
$$\Gamma_2 = \{ (\zeta,\zeta') \in \Gamma,~ d(\zeta',E) > |\zeta-\zeta'|\}.$$
We have
\begin{eqnarray*}
B_\varepsilon &=&   \iint_\Gamma 2 |f(\zeta')|^2 |p_\varepsilon(\zeta)p_\varepsilon (\zeta')|^2 \frac{ |1/p_\varepsilon(\zeta')-1/p_\varepsilon(\zeta)|^2}{|\zeta-\zeta'|^{1+2\alpha}} |d\zeta| |d \zeta'| \\
& \lesssim & e^{-2M_\varepsilon} \iint_\Gamma \frac{d(\zeta',E)^{2\delta}}{(d(\zeta,E)^\gamma+\varepsilon)(d(\zeta',E)^\gamma+\varepsilon)} \frac{ |F_\varepsilon(\zeta)-F_\varepsilon (\zeta')|^2}{|\zeta-\zeta'|^{1+2\alpha}} |d\zeta| |d \zeta'|\\
& \lesssim & e^{-2M_\varepsilon} \iint_{\Gamma_1} |\zeta - \zeta'|^{2\delta-2\gamma-2\alpha-1} |F_\varepsilon(\zeta)-F_\varepsilon (\zeta')|^2 |d\zeta| |d \zeta'|\\
& & ~~ + e^{-2M_\varepsilon} \iint_{\Gamma_2} d(\zeta',E)^{2\delta-2\gamma-2\alpha+1} \frac{ |F_\varepsilon(\zeta)-F_\varepsilon (\zeta')|^2}{|\zeta-\zeta'|^2} |d\zeta| |d \zeta'|\\
&=& B_\varepsilon^1 + B_\varepsilon^2.
\end{eqnarray*}
Since $\gamma < \delta - \alpha$ and $|F_\varepsilon|$ is bounded on $\T$, we have $B_\varepsilon^1 \lesssim e^{-2M_\varepsilon} \to 0$ as $\varepsilon \to 0$. On the other hand, applying Lemma \ref{lemmeKEL} with $\delta'=\delta-\alpha+1/2$, we obtain $B_\varepsilon^2 \lesssim M_\varepsilon e^{-2M_\varepsilon}$. Thus $B_\varepsilon^2 \to 0$ as $\varepsilon \to 0$.
Therefore we have $\lim \|p_\varepsilon f \|_{A^p_\beta} =0$ and, by Proposition \ref{infimum}, $[f]_\Z^{A^p_\beta(\T)}=[f]_{\N_0}^{A^p_\beta(\T)}$. Furthermore, by \cite[Theorem A]{LMF}, if $\dim(\cZ(f)) < \frac{2}{q}(1 - \beta q)$ then $f$ is bicyclic in $A^p_\beta(\T)$, so $f$ is cyclic in $A^p_\beta(\T)$.
\end{proof}

\subsection*{Remark} Let $1 < p < 2$ and $\beta > 0$ such that $\beta q < 1$. Using the fact that $A^2_{1/2}(\T) \subset A^p_\beta(\T)$ and \cite[Theorem 1.2]{KEL} we obtain, for $f \in A^p_\beta(\T)$ such that $|f| \in C^1(\T)$ and $|f|' \in \Lip_\delta(\T)$ for $\delta > 0$, the following result: if $\dim(\cZ(f)) < \frac{2}{q}(1 - \beta q)$ and $\log |f| \not \in L^1(\T)$ then $f^2$ is cyclic in $A^p_\beta(\T)$.

\subsection{Cyclicity for smooth functions}\label{sub42}

When $f \in C^\infty(\T)$, Makarov proves in \cite{MAK}, that $[f]_{\N_0}^{C^\infty(\T)}=[f]_\Z^{C^\infty(\T)}$ if and only if $\log |f| \not \in L^1(\T)$. As a consequence of Makarov's Theorem and some results obtained in \cite{LMF}, we get the following result:

\begin{theo} \label{Cinfty}
Let $1<p<2$, $\beta \geq 0$ such that $\beta q \leq 1$.
We have the following assertions:
\begin{enumerate}
\item If $f \in C^\infty(\T)$, $\dim(\cZ(f)) < \frac{2}{q}(1-\beta q)$ and $\log |f| \not \in L^1(\T)$ then $f$ is cyclic in $A^p_\beta(\T)$.
\item If $f \in A^1_\beta(\T)$, $\dim(\cZ(f)) > 1-\beta q$ then $f$ is not cyclic in $A^p_\beta(\T)$.
\item For $\frac{2}{q}(1-\beta q) \leq \alpha \leq 1$, there exists a closed subset $E \subset \T$ such that $\dim(E) = \alpha$ and every $f \in A^1_\beta(\T)$ satisfying $\cZ(f) \subset E$ is not cyclic in $A^p_\beta(\T)$.
\item If $p=\frac{2k}{2k-1}$ for some $k \in \N$ then there exists a closed set $E \subset \T$ such that $\dim(E)=1-\beta q$ and every $f \in C^\infty(\T)$ satisfying $\cZ(f) \subset E$ and $\log |f| \not \in L^1(\T)$ is cyclic in $A^p_\beta(\T)$.
\end{enumerate}
\end{theo}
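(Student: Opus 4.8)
The plan is to combine Makarov's theorem \cite{MAK} with the bicyclicity results of \cite{LMF}, using the observation from \eqref{equitop} and Proposition \ref{base} that $f$ is cyclic in $A^p_\beta(\T)$ precisely when it is bicyclic there and $[f]_{\N_0}^{A^p_\beta(\T)}=[f]_\Z^{A^p_\beta(\T)}$, together with the trivial implication that cyclicity forces bicyclicity. For \emph{(1)} I would establish the two halves of Proposition \ref{base} separately. Since $f\in C^\infty(\T)$ and $\log|f|\notin L^1(\T)$, Makarov's theorem gives $[f]_{\N_0}^{C^\infty(\T)}=[f]_\Z^{C^\infty(\T)}$; as $C^\infty(\T)$ is a Fr\'echet space on which the shift is a topological isomorphism, \eqref{equitop} yields $f\in[f]_{\N}^{C^\infty(\T)}$, i.e. $zQ_nf\to f$ in $C^\infty(\T)$ for some analytic polynomials $Q_n$. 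Because smooth functions have rapidly decaying Fourier coefficients, the inclusion $C^\infty(\T)\hookrightarrow A^p_\beta(\T)$ is continuous, so $zQ_nf\to f$ in $A^p_\beta(\T)$ and hence $[f]_{\N_0}^{A^p_\beta(\T)}=[f]_\Z^{A^p_\beta(\T)}$, again by \eqref{equitop}. On the other hand $\dim(\cZ(f))<\tfrac{2}{q}(1-\beta q)$ is exactly the hypothesis of \cite[Theorem A]{LMF}, so $f$ is bicyclic in $A^p_\beta(\T)$; combining, $f$ is cyclic.

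Parts \emph{(2)} and \emph{(3)} are non-cyclicity statements, and it suffices to contradict bicyclicity. The mechanism is the standard duality one: if a closed subset of $\cZ(f)$ supports a non-zero element $S$ of the dual space $A^q_{-\beta}(\T)$ with $fS=0$, then $\langle z^nf,S\rangle=0$ for every $n\in\Z$, so $[f]_\Z^{A^p_\beta(\T)}$ is contained in the proper closed subspace $\ker S$ and $f$ is not bicyclic. For (2), when $\dim(\cZ(f))>1-\beta q$ the set $\cZ(f)$ carries such an $S$ by the sharpness side of the analysis of \cite{LMF}; for (3), \cite{LMF} supplies, for each $\alpha$ in the stated range, a closed set $E$ of dimension $\alpha$ which supports a non-zero $S\in A^q_{-\beta}(\T)$ — a measure at and above the ``measure threshold'' $\tfrac{2}{q}(1-\beta q)$ — to which the argument above applies. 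Away from the left endpoint these are routine; the endpoint $\alpha=\tfrac{2}{q}(1-\beta q)$ is delicate, since a plain Salem--Frostman measure lies in $A^q_{-\beta}(\T)$ only for $\alpha$ strictly past the threshold, and there the construction must be sharpened (e.g. with logarithmic corrections).

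The core of the theorem is \emph{(4)}, and the essential point is the evenness of $q$. When $p=\tfrac{2k}{2k-1}$ one has $q=2k$ and the identity $\|S\|_{A^{2k}_{-\beta}}^{2k}=\|S^{\ast k}\|_{A^2_{-k\beta}}^{2}$, where $S^{\ast k}$ is the $k$-fold convolution of $S$ with itself on $\T$ (so $\widehat{S^{\ast k}}(n)=\widehat{S}(n)^{k}$); in particular $S\in A^{2k}_{-\beta}(\T)$ implies $S^{\ast k}\in A^2_{-k\beta}(\T)$, $S^{\ast k}=0$ iff $S=0$, and $\supp(S^{\ast k})\subset\supp(S)+\cdots+\supp(S)$. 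Using this reduction, the $p=2$ construction of \cite{KEL} (which is the case $k=1$), and the machinery of \cite{LMF}, I would build a closed set $E\subset\T$ with $\dim(E)=1-\beta q$ whose $k$-fold sumset $E+\cdots+E$ still has vanishing Bessel capacity of order $k\beta$; then any non-zero $S\in A^{2k}_{-\beta}(\T)$ with $\supp(S)\subset E$ would give $S^{\ast k}\in A^2_{-k\beta}(\T)$ supported on that sumset, hence $S^{\ast k}=0$ and $S=0$, so $E$ supports no non-zero element of $A^q_{-\beta}(\T)$. Consequently every $f\in A^1_\beta(\T)$, in particular every $f\in C^\infty(\T)$, with $\cZ(f)\subset E$ is bicyclic in $A^p_\beta(\T)$: a non-zero annihilator of $[f]_\Z^{A^p_\beta(\T)}$ would be a non-zero $S\in A^q_{-\beta}(\T)$ with $\supp(S)\subset\cZ(f)\subset E$ (using that $A^1_\beta(\T)$ is a Banach algebra, so $1/f$ is locally in $A^1_\beta(\T)$ off $\cZ(f)$, whence $\supp(S)\subset\cZ(f)$). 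Finally, for $f\in C^\infty(\T)$ with $\log|f|\notin L^1(\T)$ the argument of (1) upgrades bicyclicity to $[f]_{\N_0}^{A^p_\beta(\T)}=[f]_\Z^{A^p_\beta(\T)}$, hence to cyclicity.

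The main obstacle is the construction in (4): one needs a set of Hausdorff dimension \emph{exactly} the critical value $1-\beta q$ whose $k$-fold sumset nonetheless has zero capacity of the relevant order — this is precisely where the convolution identity (hence the evenness of $q$) is indispensable and where the $k=1$ set of \cite{KEL} must be generalized. By comparison, (1) and (2) are straightforward given Makarov's theorem and \cite[Theorem A]{LMF} with its sharpness counterpart, and (3) is straightforward except at its left endpoint.
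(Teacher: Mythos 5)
Your proposal matches the paper's proof in structure: the paper likewise handles (1) and (4) by combining Makarov's theorem with the continuity of the embedding $C^\infty(\T)\subset A^p_\beta(\T)$ to transfer $[f]_{\N_0}=[f]_\Z$ down to $A^p_\beta(\T)$, and then invokes the bicyclicity results of \cite{LMF} (Theorems 3.4 and 4.3) for all four parts, including the existence of the critical sets in (3) and (4). The additional detail you supply --- the Frostman/duality argument for (2)--(3) and the convolution-power identity behind the even-$q$ construction in (4) --- is precisely what those cited theorems of \cite{LMF} encapsulate, so your route is the same one, just with the black boxes partially opened.
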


Note that, for $1<p<2$ and $\beta \geq 0$ such that $\beta q \leq 1$, if $f$ is cyclic in $A^p_\beta(\T)$ then $\log |f| \not \in L^1(\T)$.

\begin{proof}
For $(2)$ and $(3)$, we use \cite[Theorem 3.4 and Theorem 4.3]{LMF} since if $f$ is not bicyclic then $f$ is not cyclic in $A^p_\beta(\T)$.

For $(1)$ and $(4)$, suppose $\log |f| \not \in L^1(\T)$. By \cite{MAK}, we have $[f]_{\N_0}^{C^\infty(\T)} = [f]_{\Z}^{C^\infty(\T)}$. Since the embedding $C^\infty(\T) \subset A^p_\beta(\T)$ is continuous, we obtain $[f]_{\N_0}^{A^p_\beta(\T)} = [f]_{\Z}^{A^p_\beta(\T)}$. Moreover, by \cite[Theorem 3.4]{LMF}, $f$ is bicyclic in $A^p_\beta(\T)$. Therefore $f$ is cyclic in $A^p_\beta(\T)$.
\end{proof}

In the following, we denote, for $E \subset \T$, $|E|$ the Lebesgue measure of $E$.

\subsection*{Examples} Let $E$ be a closed subset of $\T$ and $\Lambda$ be a continuous, positive, decreasing function defined on $(0,\infty)$ such that $\lim \Lambda(t) = \infty$ and $\lim t \Lambda(t) = 0$ as $t \to 0$. We consider the function $f$ defined on $\T$ by
\begin{equation}\label{fonctionf}
f(\zeta) = \exp \left( - \Lambda( d(\zeta,E) ) \right)
\end{equation}

if $\zeta \not \in E$ and $f(\zeta)=0$ otherwise. So, by \cite[Proposition A.1]{KEL2}, we have
$$\int_\T \log |f(\zeta)| |d \zeta| = - \int_\T \Lambda( d(\zeta,E) ) |d \zeta| =  - \infty \Leftrightarrow \int_\T |E_t| d\Lambda(t) = -\infty,$$
where $E_t=\{\zeta \in \T,~ d(\zeta,E) \leq t \}$. Let $N_E(t)$ be the smallest number of closed arcs of length $2t$ that cover $E$. We have, by \cite[Lemma A.3]{KEL2}, that $tN_E(t) \leq |E_t| \leq 4tN_E(t)$. Thus, by \cite[Theorem 2, p. 30]{CAR}, we have for $0 < \alpha < 1$,
$$\int_\T \frac{1}{t^{\alpha+1}N_E(t)} dt = \infty \Leftrightarrow \int_\T \frac{1}{t^{\alpha}|E_t|} dt = \infty \Rightarrow \dim(E) < \alpha.$$
Now we suppose that $\Lambda(t)=1/t^\gamma$ for $\gamma >  0$. The function $f$ defined by \eqref{fonctionf} is infinitely differentiable on $\T$. Let $1<p<2$, $\beta \geq 0$ such that $\beta q \leq 1$.
We can construct a Cantor set $E$ such that $$\int_\T \frac{|E_t|}{t^{\gamma+1}} dt = \infty ~\text{ and }~ \int_\T \frac{1}{t^{\alpha}|E_t|} dt = \infty$$ for $\alpha < \frac{2}{q}(1-\beta q)$. Then, by Theorem \ref{Cinfty}, the function $f$ is cyclic in $A^p_\beta(\T)$.
Moreover if $\gamma \geq 1$ we have $\log |f| \not \in L^1(\T)$. So, by Theorem \ref{Cinfty},

\begin{enumerate}
\item if $\dim(E) < \frac{2}{q}(1-\beta q)$ then $f$ is cyclic in $A^p_\beta(\T)$;
\item if $\dim(E) > 1-\beta q$ then $f$ is not cyclic in $A^p_\beta(\T)$;
\item for $\frac{2}{q}(1-\beta q) \leq \alpha \leq 1$, there exists a closed subset $E \subset \T$ such that $\dim(E) = \alpha$ and $f$ is not cyclic in $A^p_\beta(\T)$;
\item if $p=\frac{2k}{2k-1}$ for some $k \in \N$ then there exists a closed set $E \subset \T$ such that $\dim(E)=1-\beta q$ and $f$ is cyclic in $A^p_\beta(\T)$.
\end{enumerate}

\subsection{Density of cyclic vectors set}

When $\beta q \leq 1$, we have seen, in subsection \ref{sub42}, that there exist cyclic vectors in $A^p_\beta(\T)$. Here we show that the set of cyclic vectors in $A^p_\beta(\T)$ is a dense $G_\delta$ subset of $A^p_\beta(\T)$.

\begin{prop} \label{zerofini} Let $f \in A(\T)$.
If $\cZ(f)$ is finite then $f$ is bicyclic in $A^p_\beta(\T)$ for $p > 1$ and $\beta \geq 0$ such that $\beta q \leq 1$.
\end{prop}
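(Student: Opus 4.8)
The plan is to reduce to the case where $\cZ(f) = \{\zeta_1, \dots, \zeta_m\}$ is a finite set of points on $\T$ and exploit the fact that each singleton is a set of synthesis (indeed $\mathcal{I}(\{\zeta_j\}) = \mathcal{J}(\{\zeta_j\})$, as recalled before Lemma~\ref{cycliciteA0Ap}), together with the continuity of multiplication by elements of $A^1_\beta(\T)$ on $A^p_\beta(\T)$. Recall from Proposition~\ref{base} (more precisely from \eqref{caraCyclNorm}) that bicyclicity of $f$ in the Banach space $A^p_\beta(\T)$, in which $\PT$ is dense, is equivalent to finding trigonometric polynomials $P_n$ with $\|1 - P_n f\|_{A^p_\beta} \to 0$. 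So the goal is to approximate the constant function $1$ by multiples $Pf$ of $f$.

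First I would treat the model case $\cZ(f) = \{1\}$. Here $f \in \mathcal{I}(\{1\})$, and I want to produce $P \in \PT$ with $\|1 - Pf\|_{A^p_\beta}$ small. The idea is to first approximate $1$ in $A^p_\beta(\T)$ by a function of the form $1 - h$ where $h$ vanishes near $1$ (or at least is small near $1$): concretely one can use cut-off functions like $h_k(z) = (z-1)/(z - 1 - 1/k)$ from the proof of Lemma~\ref{cycliciteA0Ap}, which lie in $\mathcal{I}(\{1\})$ and satisfy $\|1 - h_k\|_{A^p} \to 0$; one checks the analogous estimate in the weighted norm $\|1 - h_k\|_{A^p_\beta}$ still tends to $0$ when $\beta q \le 1$ (the weight $(1+|n|)^{p\beta}$ times the geometric tail $ (1+1/k)^{-pn}$ is summable and the sum is $O(k^{-(p-1-p\beta)})$, which goes to $0$ precisely because $\beta q \le 1$, i.e. $p\beta \le p - 1$; when $\beta q = 1$ one instead gets boundedness and must argue slightly differently, e.g. replacing $h_k$ by a smoother cut-off or using that $1$ vanishing to higher order is needed). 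Then, since $h_k$ vanishes on a neighbourhood of $1$ and $f$ is continuous and nonvanishing off $\{1\}$, the function $h_k / f$ is well-defined, continuous, and lies in $A(\T) \subset A^1_\beta(\T)$ on the relevant region after a further cut-off; approximating it by a trigonometric polynomial $P$ gives $\|h_k - Pf\|_{A^p_\beta}$ small by the submultiplicative bound $\|g f\|_{A^p_\beta} \le \|g\|_{A^1_\beta}\|f\|_{A^p_\beta}$. Combining, $\|1 - Pf\|_{A^p_\beta} \le \|1 - h_k\|_{A^p_\beta} + \|h_k - Pf\|_{A^p_\beta}$ is small.

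For the general finite set $\cZ(f) = \{\zeta_1, \dots, \zeta_m\}$, I would argue by a partition-of-unity / induction on $m$: choose cut-off functions $\psi_j \in A(\T)$, each vanishing near all $\zeta_i$ with $i \ne j$, with $\psi_1 + \dots + \psi_m$ close to $1$ near $\cZ(f)$, or more simply iterate the one-point construction—peel off one zero at a time, noting that $f$ restricted to a neighbourhood of $\zeta_j$ minus the other zeros behaves like the one-point case. Alternatively, and perhaps cleanest, one can directly reduce to the one-point case by noting $f = g \prod_j (z - \zeta_j)^{k_j}$ is not quite available since $f$ need not have that form, so the partition-of-unity argument is the robust route: write $1 = \sum_j \chi_j$ with $\chi_j$ supported near $\zeta_j$, approximate each $\chi_j$ by $P_j f$ using the local analysis near $\zeta_j$, and sum.

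The main obstacle I anticipate is the borderline case $\beta q = 1$: there the naive geometric cut-offs give only $\|1 - h_k\|_{A^p_\beta}$ bounded rather than tending to $0$, so one needs a more careful choice of approximating functions that vanish to higher order at the points of $\cZ(f)$ (e.g. powers $h_k^N$, or functions whose Fourier coefficients decay fast enough to beat the weight), and one must verify these still have the property that $h/f$ is a well-behaved multiplier. A secondary technical point is checking that the local "division by $f$" really lands in $A^1_\beta(\T)$ and not merely in $A(\T)$—this uses that $\beta \ge 0$ so $A^1_\beta(\T) \subset A(\T)$ fails in the wrong direction; one should instead observe that after multiplying by a smooth compactly-supported cut-off, the quotient is $C^\infty$, hence in $A^1_\beta(\T)$ for every $\beta$, which resolves the issue.
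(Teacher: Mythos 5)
Your approach (build explicit cut-offs near each zero, divide by $f$ away from the zeros, and approximate $1$ by $Pf$ directly) is genuinely different from the paper's, but it has two gaps that I do not see how to close. First, the functions $h_k(z)=(z-1)/(z-1-1/k)$ vanish only \emph{at} the point $1$, not on a neighbourhood of it, so $h_k/f$ is a $0/0$ expression at $z=1$ and can blow up arbitrarily fast (e.g.\ if $f$ vanishes like $e^{-1/|z-1|}$); division by $f$ is therefore not licensed. To repair this you must replace $h_k$ by functions vanishing on a neighbourhood of $\cZ(f)$ that still approximate $1$ in $A^p_\beta(\T)$ — but that density statement is essentially the whole content of the proposition, and your explicit estimates do not deliver it: as you yourself compute, every naive cut-off (geometric tail or smooth bump of width $\varepsilon$) gives $\|1-h\|_{A^p_\beta}\asymp$ a power of $k$ or $\varepsilon$ that tends to $0$ only when $\beta q<1$ and is merely \emph{bounded} at the endpoint $\beta q=1$, which you explicitly leave open. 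Second, even granted such an $h$, the multiplier inequality you invoke is $\|gf\|_{A^p_\beta}\le\|g\|_{A^1_\beta}\|f\|_{A^p_\beta}$ with the weight on the multiplier, so you need $h/f\in A^1_\beta(\T)$; since $f$ is only assumed to lie in the Wiener algebra $A(\T)=A^1_0(\T)$, the quotient is \emph{not} $C^\infty$ (your cut-off only makes $h$ smooth, not $f$), and a Wiener--L\'evy argument gives $h/f\in A(\T)$ but not membership in the smaller algebra $A^1_\beta(\T)$ for $\beta>0$.

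The paper avoids all of this with a short duality argument: if $S\in A^q_{-\beta}(\T)$ annihilates $\{z^nf\}_{n\in\Z}$, then $\supp(S)\subset\cZ(f)$, so $S$ is a finite linear combination of derivatives of Dirac masses at the finitely many zeros; after multiplying by a polynomial to isolate one point, the resulting nonzero distribution would have Fourier coefficients of size $\asymp|l|^{n_1}$ with $n_1\ge 0$, which is incompatible with membership in $A^q_{-\beta}(\T)$ precisely when $\beta q\le 1$. This handles the endpoint case for free and never requires dividing by $f$. If you want to salvage a constructive proof, the honest route is to first prove by this duality that functions vanishing on a neighbourhood of a finite set are dense in $A^p_\beta(\T)$ — at which point the constructive superstructure is redundant.
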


\begin{proof}
Let $S \in A^q_{-\beta}(\T)$ such that $\langle S,z^nf \rangle=0$ for all $n \in \Z$. It suffices to prove that $S=0$.
First we have $\supp(S) \subset \cZ(f)$ (see \cite[Lemma 2.4]{LMF}) and since $\cZ(f)$ is finite, we obtain that
$$S = \sum_{k=1}^N \sum_{n=0}^{M} \lambda_{k,n}~ \delta_{z_k}^{(n)}$$
where $\cZ(f) = \{ z_k,~ k\in \llbracket 1, N\rrbracket \} \subset \T$, $\lambda_{k,n} \in \C$ and where $\delta_{z_k}^{(n)}$ is the $n$th derivative of the Dirac delta measure concentrated at the point $z_k$.

Suppose that $S \neq 0$. There exist $k_0$ and $n_0$ such that $\lambda_{k_0,n_0} \neq 0$. We define
$$T = \prod_{\substack{k=1\\k \neq k_0}}^N (z - z_k)^{M+1} ~ S.$$
So $T \in A^q_{-\beta}(\T)$ and $T \neq 0$.
Hence there exists $(c_n) \in \C^{M+1}$ such that for every test function $\varphi \in \mathcal{D}(\T)$,
$$\langle T,\varphi \rangle = \langle S, \prod_{\substack{k=1\\k \neq k_0}}^N (z - z_k)^{M+1} \varphi \rangle = \sum_{n = 0}^M \lambda_{k_0,n} ~ c_n ~ \varphi^{(n)}(z_{k_0}).$$
If we denote $n_1 = \max\{ n,~ c_n \lambda_{k_0,n} \neq 0 \}$, then we have for all $l \in \Z$
$$|\langle T,z^l \rangle |  = \left|  \sum_{n = 0}^N \lambda_{k_0,n} ~ c_n ~ (il)^n z_{k_0}^l \right| ~ \underset{l \to \infty}{\sim} ~ |\lambda_{k_0,n_1}| ~|c_{n_1}| |l|^{n_1}$$
(here $z^l=e^{il\theta}$).
This is not possible since  $T \in A^q_{-\beta}(\T)$ and $\beta q \leq 1$.
\end{proof}

\begin{lemme} \label{lemmedense}
Let $1 < p \leq 2$ and $\beta \geq 0$ such that $\beta q \leq 1$ where $1/p+1/q=1$. If $f \in A^p_\beta(\T)$ is cyclic in $A^p_\beta(\T)$ and if $P \in \PT$ is not zero then $Pf$ is cyclic in $A^p_\beta(\T)$.
\end{lemme}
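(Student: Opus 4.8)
The plan is to obtain the cyclicity of $Pf$ from that of $f$ by splitting cyclicity into its two ingredients: bicyclicity, and the coincidence $[\,\cdot\,]_{\N_0}^{A^p_\beta(\T)}=[\,\cdot\,]_\Z^{A^p_\beta(\T)}$ of the one‑sided and two‑sided invariant subspaces. The enabling observation is purely algebraic: since $\PT$ is dense in $A^1_\beta(\T)$ and, for a fixed $g\in A^p_\beta(\T)$, the map $h\mapsto hg$ is continuous from $A^1_\beta(\T)$ into $A^p_\beta(\T)$ (we have $\|hg\|_{A^p_\beta}\le\|h\|_{A^1_\beta}\|g\|_{A^p_\beta}$), one gets $[g]_\Z^{A^p_\beta(\T)}=\overline{A^1_\beta(\T)\,g}$ for every $g$. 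In particular, because $P\in\PT\subset A^1_\beta(\T)$, $[Pf]_\Z^{A^p_\beta(\T)}=\overline{A^1_\beta(\T)(Pf)}=\overline{P\,(A^1_\beta(\T)f)}$.

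The first step is to show that $Pf$ is bicyclic. Assume $f$ cyclic, hence bicyclic, so that $A^1_\beta(\T)f$ is dense in $A^p_\beta(\T)$. Using this density together with the continuity of multiplication by $P$ on $A^p_\beta(\T)$, I would check the chain $\overline{P\,(A^1_\beta(\T)f)}=\overline{P\,A^p_\beta(\T)}=\overline{A^1_\beta(\T)P}=[P]_\Z^{A^p_\beta(\T)}$, the last two equalities using again density of $\PT$ in $A^p_\beta(\T)$. The point of this chain is that the resulting closed subspace no longer depends on $f$: one has $[Pf]_\Z^{A^p_\beta(\T)}=[P]_\Z^{A^p_\beta(\T)}$. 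Since $P$ is a nonzero trigonometric polynomial, $\cZ(P)$ is finite, so Proposition~\ref{zerofini} (this is the only place the hypothesis $\beta q\le1$ enters) gives that $P$ is bicyclic, i.e. $[P]_\Z^{A^p_\beta(\T)}=A^p_\beta(\T)$; therefore $[Pf]_\Z^{A^p_\beta(\T)}=A^p_\beta(\T)$.

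The second step is the equality $[Pf]_{\N_0}^{A^p_\beta(\T)}=[Pf]_\Z^{A^p_\beta(\T)}$. By \eqref{equitop}, cyclicity of $f$ means $f\in[f]_\N^{A^p_\beta(\T)}$, so there exist $Q_n\in\PTp$ with $\|f-zQ_nf\|_{A^p_\beta}\to0$. Then $\|Pf-zQ_n(Pf)\|_{A^p_\beta}=\|P(f-zQ_nf)\|_{A^p_\beta}\le\|P\|_{A^1_\beta}\|f-zQ_nf\|_{A^p_\beta}\to0$, so $Pf\in[Pf]_\N^{A^p_\beta(\T)}$, and \eqref{equitop} applied to $Pf$ gives the desired equality. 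Combining the two steps, $[Pf]_{\N_0}^{A^p_\beta(\T)}=A^p_\beta(\T)$, that is, $Pf$ is cyclic.

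The only delicate point is the bicyclicity step. A direct attempt — given $P_n\in\PT$ with $\|1-P_nf\|_{A^p_\beta}\to0$, to produce $R_n\in\PT$ with $\|1-R_nPf\|_{A^p_\beta}\to0$ by approximating $P_n$ by multiples $R_nP$ — already fails for $P(z)=z-1$: the polynomial $R_n(z-1)$ vanishes at $1$ while $P_n(1)$ need not stay bounded when $f$ vanishes near $1$, so one cannot force $\|P_n-R_nP\|_{A^1_\beta}\to0$. The reduction $[Pf]_\Z^{A^p_\beta(\T)}=[P]_\Z^{A^p_\beta(\T)}$ bypasses this issue and shifts the whole difficulty onto Proposition~\ref{zerofini}; its hypothesis $\beta q\le1$ is sharp, since for $\beta q>1$ the space $A^p_\beta(\T)$ is a Banach algebra of continuous functions and has no bicyclic vector with a zero.
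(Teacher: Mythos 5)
Your proof is correct and follows essentially the same route as the paper: both arguments reduce the bicyclicity of $Pf$ to Proposition~\ref{zerofini} (bicyclicity of $P$) combined with bicyclicity of $f$ via the inequality $\|hg\|_{A^p_\beta}\le\|h\|_{A^1_\beta}\|g\|_{A^p_\beta}$, and both handle the one-sided condition by multiplying the approximants $zQ_nf$ of $f$ by $P$. The only difference is cosmetic: you phrase the bicyclicity step abstractly as $[Pf]_\Z^{A^p_\beta(\T)}=[P]_\Z^{A^p_\beta(\T)}$ via density and continuity of multiplication, whereas the paper writes out the explicit estimate $\|1-P_1P_2Pf\|_{A^p_\beta}\le\|1-P_1P\|_{A^p_\beta}+\|P_1P\|_{A^1_\beta}\|1-P_2f\|_{A^p_\beta}$.
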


\begin{proof}
Let $\varepsilon > 0$. By Lemma \ref{zerofini}, $P$ is bicyclic in $A^p_\beta(\T)$ since $\cZ(P)$ is finite. So there exists $P_1 \in \PT$ such that $\| 1-P_1P\|_{A^p_\beta} < \varepsilon$. Moreover since $f$ is bicyclic, there exists $P_2 \in \PT$ such that
$$\| 1-P_2f \|_{A^p_\beta} < \frac{\varepsilon}{\| P_1 P \|_{A^1_\beta}}.$$
Therefore we have 
$$\| 1-P_1P_2 Pf \|_{A^p_\beta} \leq \| 1-P_1P\|_{A^p_\beta} + \| P_1 P \|_{A^1_\beta} \| 1-P_2f \|_{A^p_\beta} < 2 \varepsilon.$$
So $Pf$ is bicyclic in $A^p_\beta(\T)$.

Moreover, since $f$ is cyclic in $A^p_\beta(\T)$, there exists $Q \in \PTp$ such that $\| f - zQf\|_{A^p_\beta} < \frac{\varepsilon}{ \| P \|_{A^1_\beta}}$. So we have
$$\| Pf - zQPf\|_{A^p_\beta} \leq \| P \|_{A^1_\beta} \| f - zQf \|_{A^p_\beta} < \varepsilon.$$
Hence, by Proposition \ref{base}, $Pf$ is cyclic in $A^p_\beta(\T)$.
\end{proof}

\begin{theo}
Let $1 < p \leq 2$ and $\beta \geq 0$ such that $\beta q \leq 1$ where $1/p+1/q=1$.
The set of cyclic vectors in $A^p_\beta(\T)$ is a dense $G_\delta$ subset of $A^p_\beta(\T)$.
\end{theo}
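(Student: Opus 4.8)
The plan is to realise the set of cyclic vectors as a countable intersection of open sets, and then to exhibit a dense family of cyclic vectors obtained by multiplying one fixed cyclic vector by trigonometric polynomials.

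For the $G_\delta$ part, note first that $A^p_\beta(\T)$ (with $1<p\le 2$, $\beta\ge 0$) is a Banach space in which $\PT$ is dense and on which the shift $S$ is a topological isomorphism, because $(1+|n\pm 1|)\asymp(1+|n|)$; hence Proposition \ref{base} applies. For $\varepsilon>0$ put
$$G(\varepsilon)=\Big\{f\in A^p_\beta(\T):\ \exists\,P\in\PT,\ \exists\,Q\in\PTp,\ \|1-Pf\|_{A^p_\beta}<\varepsilon\ \text{and}\ \|f-zQf\|_{A^p_\beta}<\varepsilon\Big\}.$$
By Proposition \ref{base}, $f$ is cyclic in $A^p_\beta(\T)$ if and only if $f\in\bigcap_{n\ge 1}G(1/n)$. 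Each $G(\varepsilon)$ is open: if $f\in G(\varepsilon)$ is witnessed by some $P\in\PT$ and $Q\in\PTp$, then the same pair works for every $g$ in a sufficiently small ball around $f$, since the submultiplicativity $\|Rg\|_{A^p_\beta}\le\|R\|_{A^1_\beta}\|g\|_{A^p_\beta}$ (applied with $R=P$ and with $R=1-zQ$) shows that $g\mapsto\|1-Pg\|_{A^p_\beta}$ and $g\mapsto\|g-zQg\|_{A^p_\beta}$ are continuous. Thus the set of cyclic vectors in $A^p_\beta(\T)$ is a $G_\delta$ set.

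For density, fix a cyclic vector $h\in A^p_\beta(\T)$; such a vector exists by Subsection \ref{sub42} (for $1<p<2$) and by the classical Wiener--Szeg\"o theory (for $p=2$). Since $h$ is cyclic, $[h]_{\N_0}^{A^p_\beta(\T)}=A^p_\beta(\T)$, so $\{Qh:Q\in\PTp\}$ — and a fortiori $\{Ph:P\in\PT\}$ — is dense in $A^p_\beta(\T)$. By Lemma \ref{lemmedense}, $Ph$ is cyclic for every non-zero $P\in\PT$, so the family $\{Ph:P\in\PT\setminus\{0\}\}$ consists of cyclic vectors. This family is still dense: a non-zero trigonometric polynomial vanishes only at finitely many points of $\T$, so multiplication by it is injective on $A^p_\beta(\T)\subset L^2(\T)$; consequently deleting $P=0$ removes only the vector $0$ from $\{Ph:P\in\PT\}$, and $0$ is not isolated in the infinite-dimensional space $A^p_\beta(\T)$ (for instance $\tfrac1k P_0h\to 0$ for any fixed non-zero $P_0\in\PT$). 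Combining the two parts proves the theorem.

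The whole argument is a Baire-category packaging, so there is no real analytic difficulty; the only things that need care are verifying that the hypotheses of Proposition \ref{base} and Lemma \ref{lemmedense} are available throughout the stated range of $(p,\beta)$, and — for $p=2$ — supplying the required cyclic vector from classical theory. The routine pieces are the $\varepsilon$--$\delta$ check that $G(\varepsilon)$ is open and the injectivity of polynomial multiplication used in the density step.
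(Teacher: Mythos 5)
Your proposal is correct and follows essentially the same route as the paper: the same sets $G(\varepsilon)$, the same openness argument via $\|Rg\|_{A^p_\beta}\le\|R\|_{A^1_\beta}\|g\|_{A^p_\beta}$, and the same density step via Lemma \ref{lemmedense} applied to $Ph$ for a fixed cyclic $h$. The only (harmless) cosmetic difference is how the degenerate case $P=0$ is excluded — you note $0$ is not isolated, while the paper forces the approximating polynomial to be non-zero by taking $\eta<\|h\|_{A^p_\beta}$.
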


\begin{proof}
Let $\varepsilon > 0$. We consider the set
$$
G_\varepsilon = \{ g \in A^p_\beta(\T),~ \exists P \in \PT,~ \exists Q \in \PTp,~ \|1-Pg\|_{A^p_\beta} < \varepsilon ~\text{ and }~ \|g-zQg\|_{A^p_\beta} < \varepsilon \}.
$$
To use a Baire Category argument, we will show that
$$\begin{array}{ll} (i) & \underset{n=1}{\overset{\infty}{\bigcap}} G_{1/n} = \{ f \in A^p_\beta(\T) \text{ cyclic in } A^p_\beta(\T) \},  \\ (ii) & \text{for $\varepsilon > 0$},~~ G_\varepsilon \text{ is an open subset of } A^p_\beta(\T), \\ (iii) & \text{for $\varepsilon > 0$},~~ G_\varepsilon \text{ is a dense subset of } A^p_\beta(\T). \end{array}$$

\bigskip \noindent
$(i)$: Let $g \in A^p_\beta(\T)$ such that for all $n \in \N$, there exist $P_n \in \PT$ and $Q_n \in \PTp$ satisfying $\|1-P_n g \|_{A^p_\beta} < 1/n$ and $\|g-zQ_n g \|_{A^p_\beta} < 1/n$. By Proposition \ref{base}, $g$ is cyclic in $A^p_\beta(\T)$. The other inclusion is clear.

\bigskip \noindent
$(ii)$ follows from the fact that, for $f \in A^1_\beta(\T)$ and $g \in A^p_\beta(\T)$, $$\|fg\|_{A^p_\beta} \leq \|f\|_{A^1_\beta} \|g\|_{A^p_\beta}.$$

\bigskip \noindent
$(iii)$: Since for $\varepsilon > 0$, $\{ f \in A^p_\beta(\T) \text{ cyclic in } A^p_\beta(\T) \} \subset G_\varepsilon$, we will show that $\{ f \in A^p_\beta(\T) \text{ cyclic in } A^p_\beta(\T) \}$ is dense in $A^p_\beta(\T)$. Let $h \in A^p_\beta(\T)$ and $\eta > 0$ such that $\eta < \|h\|_{A^p_\beta}$. Consider $f \in A^p_\beta(\T)$ which is cyclic in $A^p_\beta(\T)$. So there exists a non-zero polynomial $P \in \PTp$ such that $\|h-Pf\|_{A^p_\beta} < \eta$ and by Lemma \ref{lemmedense}, $Pf$ is cyclic in $A^p_\beta(\T)$. Therefore this shows that $G_\varepsilon$ is dense in $A^p_\beta(\T)$.
\end{proof}

\section*{Acknowledgements}

I would like to acknowledge my doctoral advisors, K. Kellay and M. Zarrabi. I am grateful to the referee for his remarks and suggestions.

\end{document}